\begin{document}

\title{Zig--Zag Modules: Cosheaves and  K-Theory}

\author{Ryan Grady
\and Anna Schenfisch}
% \thanks{Department of Mathematical Sciences, Montana State University
%\newline
%{\scriptsize \tt{ ryan.grady1@montana.edu, annaschenfisch@montana.edu}}}

\address{Department of Mathematical Sciences\\Montana State University\\Bozeman, MT 59717}
\email{ryan.grady1@montana.edu}

\address{Department of Mathematical Sciences\\Montana State University\\Bozeman, MT 59717}
\email{annaschenfisch@montana.edu}

\thanks{AS is supported by the National Science Foundation under NIH/NSF DMS 1664858.}

\begin{abstract}
Persistence modules have a natural home in the setting of stratified spaces and constructible cosheaves. In this article, we first give explicit constructible cosheaves for common data-motivated persistence modules, namely, for modules that arise from zig-zag filtrations (including monotone filtrations), and for augmented persistence modules (which encode the data of instantaneous events). We then identify an equivalence of categories between a particular notion of zig-zag modules and the combinatorial entrance path category on stratified $\mathbb{R}$. Finally, we compute the algebraic $K$-theory of generalized zig-zag modules and describe connections to both Euler curves and $K_0$ of the monoid of persistence diagrams as described by Bubenik and Elchesen.
\end{abstract}

\keywords{persistence module, zig-zag persistence, cosheaf, algebraic K-theory}
\subjclass[2010]{Primary 18F25. Secondary 32S60, 55N31, 19M05.}

\maketitle

\tableofcontents

\maketitle

% Text of Document.  Use constructs such as \section, \subsection,
% \begin{theorem} ... \end{theorem}, \begin{proof} ... \end{proof}, etc.

\section{Introduction}

In this article we aim to demonstrate the utility of viewing persistent phenomena from the perspective of constructible (co)sheaves. In particular, we demonstrate how cosheaves provide a convenient interpretation of augmented descriptors of persistence modules and how cosheaves are a convenient setting for constructing invariants via algebraic K-theory.  The present is in the same spirit of the program we first employed in \cite{GS}, namely, applying stratified mathematics and higher algebra to topological data analysis (TDA).

The use of cosheaves in TDA goes back at least to Curry \cite{curry2015}. The work of Curry and collaborators (e.g., work with Patel \cite{CP}), serves as an inspiration for our own perspectives. The key idea interpolating between persistence modules and constructible cosheaves is that of a stratified space. A persistence module $\{V_i\}_{i \in I}$ is obtained by sampling (or otherwise selecting a discrete subset of) a larger parameter space. For concreteness, consider $I \subset \RR_{\ge 0}$ as a selection of ``instances" in our one-dimensional ray of ``time." As our persistence module only changes at elements of $I$, it is locally constant on $\RR \setminus I$, which is the defining property of a constructible cosheaf.

Constructible cosheaves are particularly nice mathematical objects for several reasons, chief among them is their equivalence to representations of the so called \emph{entrance path category}; this is known as ``the" Exodromy Theorem. Any stratified space has an associated entrance path category and in good cases (e.g., when the space is a combinatorial manifold), the entrance path category is a straightforward combinatorial object -- in many cases it's simply a poset. The idea of \emph{exodromy} goes back to MacPherson and proofs in different settings appear in work of Curry and Patel \cite{CP}, Treumann \cite{Treumann}, Lurie \cite{lurie}, and Barwick with Glasman and Haine \cite{Barwick}.

Given a parameter space (and a choice of sampling instances), exodromy allows us to consider all persistence modules/constructible sheaves on that space as a category of functors. Such categories of functors then inherit desirable properties from the target category. For instance, if we consider modules valued in vector spaces, the category of persistence modules is naturally an Abelian category. Abelian categories are the home of homological and homotopical algebra, so we are free to apply the tools of algebraic topology/homotopy theory, e.g., algebraic K-theory. The combinatorial nature of the entrance path category makes K-theory computations tractable and allows us to consider connections with other persistent invariants such as Euler curves and persistence diagrams.

In the present article, we are mainly concerned with one-dimensional parameter spaces. The resulting persistence modules are  the zig-zag persistence modules of Carlsson and de Silva \cite{carlsson2010zigzag}, which includes the more typically seen monotone (standard) modules.

Readers familiar with the persistent homology transform (PHT) may be interested to note that the PHT is a special type of persistence module itself. Our computation of $K$-theory for zig-zag persistence modules has an interpretation in the setting of the PHT where the sphere of directions is $S^1$. Thus, the results of this paper may be useful for future work in the computation of other invariants of the PHT. See \cite{turner} for further background on the PHT.

\subsection{Why K-theory?}

In the later part of this article, we compute the K-theory of the category of zig-zag modules. Here, we briefly overview why K-theory is a useful invariant.

K-theory began as simply as group completion of a monoid. Indeed, let $(M,\oplus)$ be a commutative monoid and define $K_0 (M, \oplus)$ to be the unique (up to isomorphism) Abelian group, equipped with a monoid homomorphism from $M$, satisfying the universal property: for any Abelian group $A$ and homomorphism (of monoids) $\varphi \colon M \to A$, there exists a unique group homomorphism factorization through $K_0 (M, \oplus)$. This universal property is described as the \emph{universal Euler characteristic} and is conveyed diagrammatically as follows:
\[
\xymatrix{ (M, \oplus) \ar[d] \ar[r]^{\quad \forall} & A \\ K_0 (M, \oplus) \ar@{-->}[ur]_{\exists !} }.
\]
For instance, let $\cV$ be the isomorphism classes of finite dimensional vector spaces over the field~$\mathbb{R}$ (with direct sum) and~$\varphi : \cV \to \mathbb{Z}$ the rank function, then we have an induced map~$K_0 (\cV) \to \mathbb{Z}$, which happens to be an isomorphism. Expanding this example to complexes, let $\cC$ denote isomorphism classes of bounded complexes of $\mathbb{R}$-vector spaces. The natural extension of the rank function is the Euler characteristic, which again factors uniquely through $K_0 (\cC)$. (In the topological setting, the Chern character of a vector bundle is an example of such an additive map.) The universal property of $K_0$ extends to categories equipped with a symmetric monoidal structure, as isomorphism classes of objects in such a category form a commutative monoid.

K-theory is more than just a single Abelian group, but rather a spectrum, $\KK(\cC)$, associated to a category (equipped with additional structure).  Recall that spectra are the central objects of homotopy theory. The homotopy groups of $\KK(\cC)$ define the K-groups of $\cC$, i.e., $K_n(\cC) := \pi_n (\KK(\cC))$.  To first approximation, spectra can be thought of as the objects that parametrize cohomology theories. As such, they, so K-theory in particular, admit a wealth of computational tools, refined structures, and interpretations from algebraic topology. Cohomology theories are also the natural home for obstruction/anomaly theory and in this way, K-theory has become a central tool in topology (index theory, finiteness obstructions) and algebraic number theory (class field theory).

When refined to the level of spectra, K-theory has a remarkable additive structure with respect to split short exact sequences of categories. (We discuss this in Appendix \ref{app:A}, see also \cite{BGT}.) Combined with its property as the universal Euler characteristic, K-theory is the {\it universal additive invariant} of (Waldhausen) categories.

%%%%%%%%%%%%here

\subsubsection{Flavors and History of K-theory}

There are several flavors and constructions of K-theory; we trace here the history to the two we use in the present article: Waldhausen's construction and Zakharevich's theory of assemblers. See the canonical texts of  Rosenberg \cite{Rosenberg} and Weibel \cite{Weibel} for historical references and more details on the development of K-theory. 

The genesis of K-theory came in the late 1950's and early 1960's through the work of Grothendieck in complex (algebraic) geometry and Atiyah and Hirzebruch in topology.  Algebraic K-theory---the kind relevant to the present work---is an extension of Grothendieck's ideas to build a family of functors from rings to Abelian groups $K_i : \mathsf{Ring} \to \mathsf{Ab}$. While Grothendieck only defined $K_0$, suitable definitions for $K_1$ and $K_2$ were found by the mid 1960's; the contributions of Bass, Schanuel, and Milnor are most notable. (Bass and Karoubi also gave definitions of negative K-theory, $K_{-n} (R)$.) Definitions of higher K-groups was a major open problem in the early 1970's, which was first solved by Dan Quillen: the $+$-construction. (Milnor had given a definition of higher K-groups as well, though this \emph{Milnor} K-theory is only a summand of the now accepted definition of higher K-theory.) Given a ring, $R$, Quillen produced a space, $BGL(R)^+$, whose homotopy groups recovered/defined the K-theory of $R$.

Quillen quickly followed his $+$-construction with the $Q$-construction. The $Q$-construction takes as input an exact category, $\cC$, e.g., the category of finitely generated projective modules for a ring, and outputs a space, $\Omega B Q \cC$, whose homotopy groups define K-theory. Quillen used the $Q$-construction to prove many fundamental results in algebraic K-theory that restricted to those for rings, as he also proved that $+=Q$, that is, the $Q$-construction is a strict generalization of $+$-construction for rings.

The next revolution in algebraic K-theory came through Waldhausen's work in manifold topology \cite{Wald}. Published in 1985, Waldhausen gave a construction that takes as input categories with structure that generalizes that of exact categories---nowadays called Waldhausen categories---and outputs a spectrum (the basic building block of homotopy theory) whose homotopy groups define the corresponding K-groups. (Segal some 15 years earlier used his~$\Gamma$-objects to produce a K-theory spectrum in certain cases.) Waldhausen's construction is often referred to as the $S_\bullet$-construction and we give a brief overview in Appendix \ref{app:A}. The~$S_\bullet$-construction is a strict extension of the $Q$-construction. Perhaps most significantly, the~$S_\bullet$-construction satisfies an additivity result for split short exact sequences; this result has become a central tool in algebraic K-theory.

Finally, we note that there has been an extension of K-theory to the higher categorical/homotopical algebraic setting as well. The work of Blumberg, Gepner, and Tabuada \cite{BGT} proves that K-theory satisfies certain universal properties, such as additivity, (and hence is essentially uniquely defined by such properties) in this setting.

\subsubsection{K-theory and persistence}

Through the work of Patel, Bubenik and collaborators, K-theoretic considerations have started to appear in the TDA literature. Patel considered the Grothendick group, i.e., $K_0$, of one-dimensional persistence modules valued in symmetric monoidal categories \cite{PatelGen}.

Subsequently, in \cite{BubHom}, Bubenick and Mili{\'c}evi{\'c} show that the category of persistence modules over any preorder is Abelian. The key idea---which we use below as well---is that functor categories inherit many of the properties of the target category, so if the target is Abelian or Grothendieck, i.e., AB5 with a generator, then the functor category with domain a preorder (or any small category) is Abelian or Grothendieck. It would be interesting to apply Quillen's $Q$-construction to these categories of persistence modules and compare the resulting K-theories to our computations below. (We note that \cite{BubHom} contains much more than we just outlined, e.g., the authors prove an embedding theorem in the vein of the Gabriel--Popescu Theorem.)

More relevant for us is the recent article \cite{BubVirtual} by Bubenik and Elchesen. In this work, the group completion of the monoid of persistence diagrams is described, i.e., $K_0 (\mathsf{Diag})$ is defined (semi-)explicitly. Points in diagrams are counted with multiplicity, so the binary operation is simply induced by $+ \colon \NN_0 \times \NN_0 \to \NN_0$. The input data for the construction of Bubenik and Elchesen is pretty flexible, so one can talk about diagrams (and their group completions) indexed by the entire first quadrant, the integers, etc. We make contact with this work in Section \ref{sect:virtual} below.

\subsection{What we do}

We have aimed to illustrate the connection between persistence modules and cosheaves and the utility of this interplay. To this end, we accomplish the following.

\subsubsection{coSheaves from filtrations} 

The relevance of cosheaves in TDA has been advocated by Curry and others for a number of years. In Section \ref{sect:cosheaves}, we give explicit constructions of constructible cosheaves associated to persistence modules. We are particularly interested in persistence modules arising from index filtrations of spaces. In Section \ref{sect:aug}, we describe the \emph{augmented filtration cosheaf}, which records both non-instantaneous and instantaneous events. (We flag the recent work of Berkouk, Ginot, and Oudot \cite{BGO} where level-set persistence is recast in terms of sheaves over $\RR$.)

\subsubsection{Equivalence Theorem}

We prove an equivalence of categories between a localization of the category of zig-zag modules a la Carlsson and de Silva and constructible cosheaves on $\RR$. The explicit statement of the result is Theorem \ref{thm:equivcat}.  This result is stated in passing (Example 6.3) in the recent work of Curry and Patel \cite{CP}, and we make it explicit with proof. We hope the proof is as interesting to the reader as the result, though it uses techniques that are different from the rest of the paper so we relegate it to Appendix \ref{app:B}.

One motivation for this result is to argue that our $K$-theoretic computations which follow deserve to be called {\em the} $K$-theory of zig-zag modules.

\subsubsection{K-theory of Zig-Zag Modules}

In Section \ref{sect:ktheory}, we define and compute K-theory of persistence modules, viewed as constructible cosheaves on a stratified parameter space.   We use Waldhausen's $S_\bullet$ construction of K-theory.  A key input is \emph{additivity}, in this case with respect to strata. For instance, 
in the case that our parameter space is one-dimensional, e.g., monotone or zig-zag persistence, the group $K_0$ is the free abelian group on the strata of parameter space (Theorems \ref{thm:bigOplus} and \ref{thm:set}). This result is true for both $\Vect$ valued modules and modules valued in pointed sets.

The higher K-groups do not vanish but rather are given by the algebraic K-theory of fields and/or the sphere spectrum. In forthcoming work, we aim to interpret these higher K-groups as arising from data.

The constructions and techniques we present apply to parameter spaces of arbitrary dimension.

\subsubsection{Euler Curves and Virtual Diagrams}

We conclude the body of the paper by connecting our K-theoretic work back to some recent work in TDA. First, we show how the Euler curve of a persistence module has a natural interpretation as a class in K-theory. (This is as expected, e.g., Kashiwara and Schapira \cite{KS} prove that $K_0$ is isomorphic to constructible functions via a local Euler index.)
With this observation, we define an \emph{Euler class} for arbitrary persistence modules regardless of dimension; this is Definition \ref{defn:euler}.  Lastly, Section \ref{sect:virtual} builds a group homomorphism from $K_0$ of persistence modules to Bubenik and Elchesen's Abelian group of virtual persistence diagrams.

\subsection*{Conventions}
We assume the reader has some familiarity with algebraic topology, and freely use concepts from Hatcher's standard text~\cite{hatcher}. 

Throughout, we will let $\Vect_\FF$ be the category of finite dimensional vector spaces over the field $\FF$ and linear maps. Much of our work doesn't depend on making a choice of field and we simply use the notation $\Vect$.

Unless otherwise noted, we will assume all stratified spaces are combinatorial manifolds equipped with their native stratification, notions we define in the next section.

\subsection*{Acknowledgements}
We thank David Ayala for many discussions related to the content of this and other articles. We also thank Peter Bubenik for several discussions related to zig-zag persistence and other mathematical topics in TDA. Finally, we thank the anonymous referee for feedback and suggestions which have greatly enhanced the readability of the manuscript.

\section{Constructible coSheaves}\label{sect:prelims}

This section is a terse introduction to terminology and notation we will use throughout the sequel. Examples and further details are abundantly available, e.g., \cite{CP} or \cite{GS}.

\subsection{Stratified/Constructible Basics}

\begin{definition}
Let $(\cP, \le)$ be a poset. The {\em upward closed topology} on $(\cP,\le)$ is defined as follows: $U \subseteq \cP$ is open if and only if for all $u \in U$, $\cP_{u \le} := \{ p \in \cP ~|~ u \le p\} \subseteq U.$ 
\end{definition}

The upward closed topology is also known as the {\em Alexandrov} topology.

\begin{definition}
A {\em stratified topological space} is a triple $(X \xrightarrow{\phi} \cP)$ consisting of
\begin{itemize}
    \item a paracompact, Hausdorff topological space, $X$,
    \item a poset $\cP$, equipped with the upward closed topology, and
    \item a continuous map $X \xrightarrow{\phi} \cP$.
\end{itemize}
\end{definition}

Note that any topological space is stratified by the terminal poset consisting of a singleton set. Moreover, the simplices of a simplicial complex, $K$, come equipped with the structure of a poset, and we call the resulting stratification of $K$ the \emph{native stratification} which will denote by $\Nat(K)$.

\begin{definition} Given a stratified topological space $\phi : X \to \cP$, and any $p \in \cP$, the {\em $p$-stratum}, 
$X_p$, is defined as
$$X_p := \phi^{-1}(p).$$
\end{definition}

\begin{example}\label{ex:s1}
For $n \in \NN$, let $[n]$ denote the totally ordered set $\{0<1<\dotsb<n\}$. Consider a stratified circle, $S^1 \to [1]$, stratified by $v$, a single vertex, and $\alpha$, the arc which is the complement of $v$. This example is illustrated in Figure \ref{fig:s1}. (So the map $\phi : S^1 \to [1]$ is given by $v \mapsto 0$ and $\alpha:= S^1\setminus \{v\} \mapsto 1$.) The 0-stratum is the vertex $v$ and the $1$-stratum is the arc $\alpha$, i.e., $S^1_0 = \{v\}$ and $S^1_1 = S^1 \setminus \{v\}= \alpha$.
\end{example}

\begin{figure}[h!] \label{fig:s1}
\centering
\includegraphics[width=.25\textwidth]{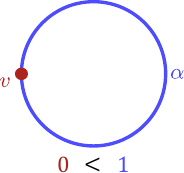}
\begin{caption}{A stratified circle, $S^1 \to [1]$ as in Example \ref{ex:s1}, where $v \mapsto 0$ and $\alpha \mapsto 1$.}
\end{caption}
\end{figure}

%%%%%%%TIKZ Error, so using xymatrix

\begin{definition} 
A {\em map of stratified topological spaces} $(\phi \colon X \to \cP)$ to $(\psi \colon Y \to \cQ)$ is a pair of 
continuous maps $(f_1, f_2)$ making the following diagram commute.
%\[
%\begin{tikzcd}
%    X \arrow[r,"f_1"] \arrow[d,"\phi"] & Y \arrow[d,"\psi"] \\
%    \cP \arrow[r,"f_2"] & \cQ.
%    \end{tikzcd}
%    \]
\[
\xymatrix{
X \ar[r]^{f_1} \ar[d]_{\phi} & Y \ar[d]^{\psi}\\
\cP \ar[r]^{f_2} & \cQ.}
\]
A map of stratified spaces is a {\em stratified homeomorphism} if it admits a two-sided (stratified) inverse.

\end{definition}

\begin{definition}
Let $X \to \cP$ be a stratified space and $x \in X_p \subseteq X$ a point. The space $X$ is \emph{conically stratified at} $x$ if there exists an open neighborhood, $U_x$, of $x$ and a stratified homeomorphism $U_x \cong Z \times CY$ where $Z$ is a topological space and $CY$ is the cone on a space $Y$ stratified by $\cP_{>p}$. A space is conically stratified if it is conically stratified at all of its points.
\end{definition}

\begin{definition}
Let $L$ be a polyhedron, so every point admits a neighborhood which is a finite union of simplices. Recall that a map $f \colon L \to \RR^n$ is \em{piecewise linear} (PL) if there exists a triangulation of $L$ such that restricted to each simplex $f$ is linear.
\end{definition}

\begin{definition}
A \emph{piecewise linear (PL) manifold} is a topological manifold which admits an atlas where transition functions are piecewise linear\footnote{Admiting a PL atlas is equivalent to specifying a class of trangulations of the underlying manifold which is stable under subdivision.}.
\end{definition}

Completely analogously to smooth manifolds, PL manifolds form a category with morphisms being PL maps and isomorphisms being PL homeomorphisms.

\begin{definition} 
A \emph{combinatorial manifold} $X$ is a triangulated PL manifold. That is, a combinatorial manifold is a PL manifold $X$ along with a simplicial complex $K$ and a PL homeomorphism $K \to X$. The manifold $X$ inherits a native stratification from the simplicial complex $K$.
\end{definition}

\begin{remark}
As discussed in \cite{AFT}, every Whitney stratified manifold is conically stratified. In particular, a combinatorial manifold $X$ is conically stratified.
\end{remark}

For further details on PL and combinatorial manifolds, see \cite{RSPL} or Section 3.9 of \cite{thurston}.

\begin{definition}[\cite{GS}]\label{def:connectedstrat}
Let  $(S \xrightarrow{ \varphi } \cQ)$ be a stratified space, $S \hookrightarrow X$ a topological embedding, and $\pi_0 (X \setminus S)= \cA$. Define the poset, $\cQ^{\wwedge}$, as the set $\cQ \amalg \cA$, subject to the following generating relations:
\begin{enumerate}
\item The relations of $\cQ$;
\item For $\ell \in \cQ$ and $\alpha \in \cA$, $\ell \le \alpha$ if and only if $\varphi^{-1}_S (\ell) \subseteq \overline{\alpha}$, i.e., the $\ell$-stratum is in the closure of the connected component indexed by $\alpha$.
\end{enumerate}
There is an obvious extension of the map $\varphi$, $\psi_S \colon X \to \cQ^{\wwedge}$ and we call this stratification the {\em connected ambient stratification}. We often denote this stratified space by $(X,S)^{\wwedge}$.
\end{definition}

A typical (easy) example of the preceding is considering a discrete subset $I \subset \RR$. The resulting stratified space, $(\RR, I)^{\wwedge}$, is a combinatorial manifold.

\begin{definition}
Let $X$ be a topological space, $\sf{Op}(X)$ the poset of open sets in $X$, and $V$ a category. A \emph{precosheaf on $X$ valued in $V$} is a functor $\cF \colon \sf{Op}(X) \to V$. A precosheaf is a cosheaf if for each open $U \subseteq X$ and any open cover of $U$, $\{U_i \to U\}$, there is an equivalence (in $V$)
\[
\colim \left ( \coprod_{i,j} \cF(U_i \cap U_j )\rightrightarrows \coprod_i \cF(U_i) \right ) \xrightarrow{\sim} \cF(U).
\]
\end{definition}

For what remains, we will assume $V$ is a nice category, so that cosheafification exists. (Cosheafification is quite subtle, even compared to its dual notion of sheafification.) In particular, we will later focus on the case that $V=\mathsf{Set}$ or $V=\Vect$. 

\begin{lemma}\label{lem:bases}
Let $\cB$ be a basis for the topology of the space $X$ and let $\cF$ be a cosheaf on the poset determined by $\cB$. There is a unique (up to isomorphism) extension of $\cF$ to a cosheaf on $X$.
\end{lemma}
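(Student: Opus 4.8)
The plan is to construct the extension explicitly by left Kan extension along the inclusion of the basis poset into $\mathsf{Op}(X)$, and then to verify that the result is a cosheaf and that the extension is essentially unique. Write $\cB \hookrightarrow \mathsf{Op}(X)$ for the full subposet inclusion, and recall that $\cF \colon \cB \to V$ is given (and is a cosheaf on the site $\cB$, i.e. satisfies descent for covers of basis elements by basis elements). Define $\widetilde{\cF} := \mathrm{Lan}_{\cB \hookrightarrow \mathsf{Op}(X)} \cF$, so that for an arbitrary open $U$,
\[
\widetilde{\cF}(U) \;=\; \colim_{\;B \in \cB,\; B \subseteq U} \cF(B),
\]
the colimit being taken over the (filtered, since $\cB$ is a basis closed under nothing in particular but the comma category $\cB_{/U}$ is cofiltered-free — more precisely the indexing category $\{B \in \cB : B \subseteq U\}$ is a poset that is up-directed by unions only when $\cB$ is closed under finite unions, so in general one simply takes the colimit over this poset) diagram. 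Since $V$ is assumed nice (cocomplete, with cosheafification), this colimit exists.

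The verification then proceeds in three steps. First, I would check that $\widetilde{\cF}$ restricts back to $\cF$ on $\cB$: for $B \in \cB$ the indexing poset $\{B' \in \cB : B' \subseteq B\}$ has $B$ as a terminal object, so the colimit is just $\cF(B)$, giving a natural isomorphism $\widetilde{\cF}|_{\cB} \cong \cF$. Second, and this is the substantive point, I would show $\widetilde{\cF}$ is a cosheaf on all of $\mathsf{Op}(X)$. Given an open $U$ and an arbitrary open cover $\{U_i \to U\}$, I refine it: every $U_i$ is itself a union of basis elements, so the collection of all basis elements contained in some $U_i$ is a basis cover of $U$, and cofinal (as an indexing poset) among basis elements contained in $U$. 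Using that $\cF$ already satisfies descent on $\cB$ and that colimits commute with colimits, one rewrites $\widetilde{\cF}(U)$ as the relevant coequalizer-type colimit $\colim\bigl(\coprod_{i,j}\widetilde{\cF}(U_i\cap U_j)\rightrightarrows\coprod_i\widetilde{\cF}(U_i)\bigr)$; the key manipulations are interchanging the double colimit and using that intersections of basis elements refine to basis elements (again invoking the basis axiom). Third, I would establish uniqueness: if $\cG$ is any cosheaf on $X$ with $\cG|_{\cB}\cong\cF$, then for each open $U$ the cosheaf condition applied to the cover of $U$ by all basis elements contained in it exhibits $\cG(U)$ as precisely the same colimit $\colim_{B\subseteq U}\cG(B)\cong\colim_{B\subseteq U}\cF(B)=\widetilde{\cF}(U)$, naturally in $U$; hence $\cG\cong\widetilde{\cF}$.

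The main obstacle I anticipate is the second step — verifying the full cosheaf (descent) condition for \emph{arbitrary} open covers, not just basis covers — because cosheafification and colimit-descent are genuinely delicate (as the excerpt itself warns, cosheafification is subtler than sheafification), and one must be careful that the colimit defining $\widetilde{\cF}$ interacts correctly with the coequalizer diagrams over intersections $U_i \cap U_j$, which need not be basis elements and whose basis refinements must be threaded through the colimit. The cleanest route is likely to reduce a general cover to the canonical basis cover via a cofinality argument on the indexing posets of covers (the basis cover refining any given cover, with the refinement map inducing an isomorphism on the associated descent colimits), so that checking descent reduces to descent for basis covers of basis elements — which is exactly the hypothesis on $\cF$. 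Alternatively, one can phrase the whole argument as: the left Kan extension $\widetilde{\cF}$ agrees with the cosheafification of the left Kan extension because $\cF$ was already a cosheaf on the site $\cB$, invoking the comparison between cosheaves on a site and cosheaves on the associated topological space — but this essentially repackages the same cofinality bookkeeping, so I would present the hands-on version for concreteness.
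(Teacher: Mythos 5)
Your construction of the extension as the left Kan extension $\widetilde{\cF}(U)=\colim_{B\in\cB,\,B\subseteq U}\cF(B)$ is exactly the approach the paper indicates: the paper states the lemma without proof and offers only the Kan extension diagram as the idea. Your plan for the verification (restriction back to $\cB$, descent for arbitrary covers by refining to basis covers and a cofinality/colimit-interchange argument, and uniqueness from the cosheaf condition applied to the canonical basis cover) is the standard dual of the sheaf-on-a-basis argument and is correct.
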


The idea of the lemma can be thought of in terms of a Kan extension diagram:
\[
\xymatrix{
\cB \ar[r]^{\cF} \ar@{^{(}->}[d] & V\\ \sf{Op}(X) \ar@{-->}[ur]_{\exists !}}.
\]

\begin{definition}
Let $M \to \cP$ be a stratified space (not nec. conical or simplicial) and $\cF$ a cosheaf on $M$. The cosheaf, $\cF$, is \emph{constructible} if it is locally constant when restricted to any stratum of $M \to \cP$, i.e., given $p \in \cP$ and $x \in M_p$ there exists a neighborhood $p \in U \subseteq M_p$ such that $\cF |_U$ is constant.
\end{definition}

\begin{definition}
Let $\cF$ be a (pre)cosheaf on $X$ and $p \in X$. The \emph{costalk of $\cF$ at $p$} is defined by
\[
\cF_p := \lim_{U \ni p} \cF (U) .
\]
\end{definition}

%
%\begin{lemma}
%Bootstrap previous lemma to a recognition/builder result for constructible sheaves....
%\end{lemma}

\subsection{Operations on coSheaves}

Given a continuous map $\xi \colon X \to Y$, there is in an induced functor on the posets of opens $\hat{\xi} \colon \sf{Op}(Y) \to \sf{Op}(X)$ given by preimages with respect to $\xi$.

\begin{definition}
Let $\xi \colon X \to Y$ be a continuous map and $\cF$ a (pre)cosheaf on $X$. The \emph{pushforward} of $\cF$, $\xi_\ast \cF$, is the (pre)cosheaf on $Y$ given by $\xi_\ast \cF := \cF \circ \hat{\xi}$.
\end{definition}

There is a contravariant functor as well associated to a map $\xi \colon X \to Y$: the pullback $\xi^\ast \colon \sf{coShv}(Y) \to \sf{coShv}(X)$. As a continuous map is not necessarily an open map, $\xi^\ast$ is (slightly) more involved to define: it is the limit over opens containing $\xi (U)$ for~$U \subseteq X$ an open. Only pushforwards appear below.

\begin{example}
Let $p \in X$ be a point in the topological space $X$ and $i \colon p \hookrightarrow X$ the inclusion map. Let $\cF$ be a cosheaf on $X$, then $i^\ast \cF$ is the costalk at $p$ of $\cF$, $\cF_p$. Let $W$ be a cosheaf on $p$, then $i_\ast W$ is a \emph{skyscraper cosheaf} on $X$.
\end{example}

\begin{example}\label{ex:Cmap}
Let $\xi \colon (\phi \colon X \to \cP) \to (\psi \colon Y \to \cQ)$ be a stratified map and $\cF$ a constructible cosheaf on $X$. Although the pullback of a constructible cosheaf is always constructible, it is not necessarily the case that $\xi_\ast \cF$ is constructible on $Y$.
\begin{itemize}
\item Consider the inclusion $\iota \colon [0,1/2) \hookrightarrow [0,1)$ and let $\underline{V}$ be a nonzero constant cosheaf on $[0,1/2)$. Further stratify $[0,1/2)$ and $[0,1)$ with zero-stratum $\{0\}$ and one-stratum $(0,1/2)$ (resp.~$(0,1)$). The cosheaf $\iota_\ast \underline{V}$ is not locally constant on $(0,1)$ as
\[
\iota_\ast \underline{V} (0,1/4) = V, \quad \text{ while } \quad \iota_\ast \underline{V} (3/4,1) = 0.
\]
\item Constructibility is preserved by pushforwards in certain cases. Let $C \colon [0,4] \to [0,3]$ be the ``elementary collapse" of the interval $[2,3]$, i.e.,
\[
C(t) = \begin{cases} 
t, \quad \;\; \text{ if } 0 \le t <2\\
2, \quad \; \; \text{ if } 2 \le t \le 3\\
t-1, \text{ if } 3<t\le 4
\end{cases}.
\]
The map $C$ is a stratified map with respect to the (connected) ambient stratifications induced by $\{0,1,2,3,4\} \subset [0,4]$ and $\{0,1,2,3\} \subset [0,3].$ Let $\cF$ be any constructible cosheaf on $[0,4]$. It is straightforward to verify that $C_\ast \cF$ is constructible on $[0,3]$.
\end{itemize}

\end{example}

\subsection{Entrance Paths and Their Representations}

Given a stratified space, $M \to \cP$, an entrance path is a continuous path in $M$ such that it for all time it stays in a stratum or enters into a deeper (with respect to $\cP$) stratum.

\begin{definition}
Let $M \to \cP$ be a stratified space. The \emph{entrance path category} of $M \to \cP$, $\sf{Ent}(M,\cP)$ has objects the points of $M$ and morphisms (elementary) homotopy classes of entrance paths.
\end{definition}

\begin{exodromy}[Theorem 6.1 of \cite{CP}]
Let $M \to \cP$ be a conically stratified space and $V$ a category. There is an equivalence of categories
\[
\sf{cShv}^V_{cbl}(M,\cP) \xrightarrow{\sim} \sf{Fun} (\sf{Ent}(M,\cP), V)
\]
between constructible cosheaves on $M$ and representations of its entrance path category.

\end{exodromy}

\begin{definition}
Let $M \to \cP$ be a combinatorial manifold. The \emph{combinatorial entrance path category}, $\sf{Ent}_\Delta (M,\cP)$ has as objects the strata of $M$ and a morphism $\sigma \to \tau$ whenever $\tau$ is a face of $\sigma$.

\end{definition}

\begin{proposition}
Let $M \to \cP$ be a combinatorial manifold. There is an equivalence of categories $\sf{Ent} (M,\cP) \xrightarrow{\sim} \sf{Ent}_\Delta (M,\cP)$. 
\end{proposition}

\begin{proof}
Define a functor $F: \sf{Ent} (M,\cP) \to \sf{Ent}_\Delta (M,\cP)$, where the image of a point $x \in \sf{Ob}(\sf{Ent} (M,\cP))$ is unique simplex $\sigma$ containing $x$, and the image a morphism $x \to y$ is the combinatorial entrance path from $F(x) \to F(y)$ (well-defined since the simplex containing $y$ must be a face of the simplex containing $x$ in order to be an entrance path). We claim that $F$ is fully faithful and essentially surjective. Again, let $x \in \sigma$,~$y \in \tau$, so that $\tau$ is a face of $\sigma$. Since $\tau$ and $\sigma$ are face-coface pairs of a non-degenerate triangluation, the subspace $\tau \cup \sigma$ deformation retracts onto $\tau$, meaning there is a unique homotopy class of entrance paths $x \to y$. Furthermore, there is a unique morphism $\sigma \to \tau$ in $\sf{Ent}_\Delta (M,\cP)$, i.e., $F$ is fully faithful. Next, we observe that, for any simplex $\sigma \in \sf{Ob}(\sf{Ent}_\Delta (M,\cP))$, we can always find a point $x$ so that $F(x) = \sigma$ (for example, let $x$ be the barycenter of $\sigma$). That means we have shown $F$ is also essentially surjective, and thus gives an equivalence of categories.
\end{proof}

\begin{remark}
One might hope that there is an equivalence of entrance and combinatorial entrance path categories for a larger class of stratifications. However, even when a space is stratified by a ``degenerate'' triangulation, this equivalence does not generally hold.
For instance, consider the stratified space shown in Figure \ref{fig:s1}, $S^1 \to [1]$, stratified by $v$, a single vertex, and its complement $\alpha$, an open arc. Let $x \in \alpha$. Then there are two distinct homotopy classes of entrance paths from $x \to v$ in $\sf{Ent} (M,\cP)$, but only one combinatorial entrance path given by the face relation in $\sf{Ent}_\Delta(M,\cP)$.
\end{remark}

One useful interpretation of the preceding proposition is that the data of a constructible cosheaf on a combinatorial manifold is just a specification of costalks on the stratifying poset and linear maps between them.

\section{Persistence Modules, Persistence Cosheaves, and Filtrations}\label{sect:cosheaves}

We now introduce our main actors: persistence modules and persistent cosheaves. To start, we consider constructible cosheaves that arise from common types of persistence modules and/or filtrations. We construct these cosheaves in a way that is compatible with traditional models of the specific filtration or module in question. We finish the section with an equivalence result relating zig-zag modules to one-dimensional constructible cosheaves.

\subsection{Persistent Definitions}

\begin{definition}\label{def:module}
A \emph{persistence module} is a functor $P \colon \mathbb{P} \to \Omega$, where $\mathbb{P}$ is some poset category. Specifically, we may refer to these as \emph{$\mathbb{P}$-indexed persistence modules.}  $\mathbb{P}$-indexed persistence modules define a category: the functor category,  whose morphisms are natural transformations between the functors. 
\end{definition}

Hereafter, we take $\Omega$ to be $\Vect_\FF$, the category of vector spaces over a field $\FF$, and by $\FF^i$, we mean a vector space of dimension $i$ in $\Vect_\FF$.

The previous definition is general -- in what follows, we will mostly restrict our attention to single-parameter persistence modules. There are two flavors of such modules common in the literature: zig-zag \cite{carlsson2010zigzag} and monotone (standard) persistence modules \cite{zomorodian2005computing}. We note that monotone persistence modules are most commonly called simply `persistence modules;' we have added the word `monotone' to emphasize their distinction from more general modules.

To any poset there is an associated undirected graph: its Hasse diagram. Properties of the Hasse diagram, e.g., if a Hasse diagram is planar, are used in order theory as they are often more accessible than the abstract poset. We will find it useful to consider the Hasse diagram of a poset as a one-dimensional simplicial complex.

\begin{definition}
Let $\PP$ be a poset.
\begin{itemize}
\item The poset $\PP$ is  \emph{zig-zag} if its Hasse diagram is homeomorphic to the closed interval, half-closed interval, or $\RR$.
\item A representation of a zig-zag poset $P \colon \mathbb{P} \to \Vect$ is a \emph{zig-zag persistence module}.
\item If $\PP$ is a linear order, then a representation $P \colon \mathbb{P} \to \Vect$ is a \emph{monotone persistence module}.
\end{itemize}
\end{definition}

Consider a zig-zag persistence module $P \colon I \to \Vect$, where the objects of $I$ are a discrete subset of real numbers (with potentially non-standard ordering). This, in turn, defines a stratification of $\RR$, the connected ambient stratification $(\RR, I)^{\wwedge}$ (where we have a zero-stratum for every object of $I$ and a one-stratum for every connected component of $\RR \setminus I$, as in~\cite{GS}). To define a cosheaf on $\RR$, it suffices to define its values on a basis of the topology on $\RR$.

First, we give a cosheaf theoretic interpretation of the notion of zig-zag modules found in~\cite{carlsson2010zigzag}. We call this cosheaf \emph{propagated} because the functor is entirely determined by the ordering of and assignments to zero strata; the value over a one-stratum is propagated from either endpoint depending on the ordering of the relevant poset.

\begin{construction}[The Propagated Persistence Cosheaf on $\RR$]
Given $P: I \to \Vect$ with $I \subset \RR$ discrete, we define the \emph{the propagated persistence cosheaf} $F_P \colon \sf{Opens}(\RR) \to \Vect$~as follows. Let $B_\epsilon \subset \RR$ be a metric $\epsilon$-ball so that $2\epsilon > 0$ is less than the distance between any pair of zero-strata. Then $B_\epsilon$ either contains a single zero-stratum or no zero-stratum, and we assign values for $F_P$ as follows:
\[ 
F_P(B_\epsilon) = \begin{cases} P(i),   \text{   if   } i \in B_\epsilon \text{   with   } i \in I  \text{   or if   } B_\epsilon \subset (-\infty, i)  \text{   for   } i \in I  \\
P(k),  \text{   if   } B_\epsilon \subset (i,j) \text{   for   } i \neq j \in I \cup \{ \infty\} \text{ and } k = \min_I \{i,j\}\\
\end{cases} .
\]
Next, we describe the assignment of morphisms. If $B_\epsilon$ contains a zero-strata, $i$, or if $B_\epsilon'$ is entirely contained in some one-strata $(i,j)$, then $F_P(B_\epsilon \hookrightarrow B_\epsilon') = Id_{F(B_\epsilon)}$ . Suppose instead that $B_\epsilon'$ contains the vertex $i$ but  $B_\epsilon \subset (i,j)$. Then $F_P(B_\epsilon \hookrightarrow B_\epsilon') = (P(i) \to P(j))$ if $i < j$ (or $(P(j) \to P(i))$ if $j < i$). See Figure \ref{fig:cosheaf}. \label{cons:pmcosheaf}
\end{construction}

The cosheaf $F_P$ is locally constant on strata, so it defines a constructible cosheaf on the stratified space $(\RR, I)^{\wwedge}$.

\begin{example}
\begin{figure}[h!] \label{fig:cosheaf}
\centering
\includegraphics[width=.75\textwidth]{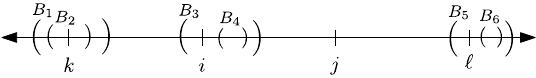}
\begin{caption}{Examples of open intervals occurring in Construction \ref{cons:pmcosheaf}.}
\end{caption}
\end{figure}
Suppose that $I$ is the poset $k > i > j < l$, where $k,i,j,$ and $l$ are ordered with the standard ordering on $\RR$ as in Figure \ref{fig:cosheaf}. Then $F_P(B_2 \hookrightarrow B_1) = Id_{F_P(B_2)} = Id_{P(k)}$, $F_P(B_4 \hookrightarrow B_3) = (P(j) \to P(i))$, and $F_P(B_6 \hookrightarrow B_5) = Id_{P(\ell)}$.
\end{example}

\subsection{Filtered Spaces and Cosheaves}
Next, we discuss how persistence-modules and persistence module cosheaves relate to filtrations of spaces.

\begin{definition}[Filtration]
Let $K$ be a simplicial complex. A \emph{filtration of $K$} is a sequence of subcomplexes $\{K_i\}_{i \in I}$ such that, for every $i$, there is an inclusion of spaces $K_i \hookrightarrow K_{i+1}$ and so that $K_{0} = \emptyset$ and $K_{\max \{i \in I\}} = K$.
\end{definition}

\begin{example}
If we take $I \subset \RR$ to be the indexing set of a filtration, then there is a natural way to view $I$ as a poset with the standard ordering of $\RR$. Passing to homology in degree $n$ defines an associated monotone persistence module via the assignment $i \mapsto H_n(K_i)$. The propagated persistence cosheaf on $\RR$ (see Construction \ref{cons:pmcosheaf}) is easy to describe.  Indeed, for a single one-stratum, we have $F(i,j) = H_n(K_i)$ and that the costalk of $F$ at a zero-stratum $i$ is $H_n(K_i)$. 
\end{example}

\subsubsection{Monotone and Index Filtrations}
Let $f: K \to \RR$ be a monotone function on simplices. That is, whenever $\tau$ is a face of $\sigma$, we have $f(\tau) \leq f(\sigma)$. Let $m_1 < m_2 < \ldots < m_p$ be the ordered set of minimum values in $\RR$ for which each $f^{-1}(-\infty, m_i]$ is a distinct non-empty simplicial complex. Setting $K_{m_0 = 0} = \emptyset$ and $K_{m_i} = f^{-1}(-\infty, m_i]$, we  define the \emph{(monotone) filtration of $K$ by $f$} as
$$ \emptyset = K_{m_0} \subset K_{m_1} \subset \ldots \subset K_{m_p} = K.$$
Notice that, by construction, all inclusion maps are in the direction of increasing index. Furthermore, if $K$ has $n$ non-empty simplices, $p \leq n+1$.

Next, suppose that $\emptyset=\sigma_{0} \prec \sigma_1 \prec \sigma_2 \prec \ldots \prec \sigma_n$ is a total order of the simplices of $K$ so that if either $f(\sigma_i) < f(\sigma_j)$, or if $\sigma_i$ is a face of $\sigma_j$, then $i<j$. Letting $K'_j = \{\sigma_i \mid i \leq j\}$, the increasing sequence of $n+1$ subcomplexes $\{K'_i\}$ is an \emph{index filtration compatible with the monotone filtration}.

In what follows, we will use $n$ to denote the number of non-empty simplices in a simplicial complex $K$ and $p$ to denote the number of steps in a monotone filtration.

\begin{remark}
Index filtrations are themselves monotone. Index filtrations are compatible with themselves, but to no other index filtrations.
\end{remark}

\subsubsection{From Index to Monotone}\label{ssec:cmap}
Suppose that $\{K_{m_j}\}_{m_j \in M}$ is a monotone filtration with filter function $f$ and $\{K'_i\}_{i \in [1,n]}$ is a compatible index filtration. Then for every $m_j \in M \setminus \{m_0\}$, there is some maximum interval $[\ell,r)$ for $\ell, r \in \{1, 2, \ldots, n\} \cup \{\pm \infty\}$ such that $f(\sigma_\ell) = f(\sigma_r) = m_j$ (where, whenever $r = \infty$ or $r = -\infty$, we define $\sigma_\infty := \sigma_n$ and $\sigma_{-\infty}= \sigma_0$, respectively). These intervals cover $\RR$, and every interval corresponds to a unique $m_j \in M$. Then we define a map of stratified spaces, $C: (\RR, \sf{Nat}([1,n])^{\wwedge}) \to (\RR, \sf{Nat}(M\setminus\{m_0\})^{\wwedge})$ that maps intervals with a particular value under the filter function $f$ to intervals with that same value under $f$. 

\begin{definition}\label{def:cmap}
Suppose that $a \in [\ell, r)$, where $[\ell, r)$ is the associated interval for some $m_j \in M$. Three cases arise: if $-\infty < \ell ,r < \infty$, we assign
\begin{align}
C(a) = \begin{cases} 
					m_j, \text{   if   } a < r-1\\
					m_j (r-a) + m_{j+1}(a - (r-1)), \text{   if   } a \geq r-1
				\end{cases}.
\end{align}
if $[\ell, r) = [-\infty, 1)$, we assign
\begin{align}
C(a) = am_1
\end{align}
and if $[\ell, r) = [n, \infty)$, we assign
\begin{align}
C(a) = \frac{a m_p}{n}.
\end{align}
\end{definition}
\begin{lemma}
$C$ is a stratified map. 
\end{lemma}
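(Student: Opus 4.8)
The plan is to verify directly that the pair of maps $(C, \mathrm{id}_{\cP})$ — or more precisely $(C, f_2)$ for the obvious induced order-map $f_2$ on the stratifying posets — satisfies the two requirements of a map of stratified spaces: continuity of $C$ as a map of topological spaces, and compatibility with the stratification maps (i.e., $C$ sends the stratum indexed by $\ell$ into the stratum indexed by $f_2(\ell)$, equivalently the square commutes). Since the source and target are both $\RR$ with connected ambient stratifications coming from finite discrete subsets, the stratifying posets $\sf{Nat}([1,n])^{\wwedge}$ and $\sf{Nat}(M\setminus\{m_0\})^{\wwedge}$ are each a zig-zag poset (a line), and $f_2$ will just be the order map recording which associated interval $[\ell,r)$ maps into which $m_j$-interval, together with the induced assignment on the bounding vertices. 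The essential content is entirely about the single map $C$ on underlying spaces.

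First I would check continuity. The map $C$ is defined piecewise by the three cases of Definition \ref{def:cmap}, and on each associated interval it is given by an affine (hence continuous) formula; within the generic case, the two sub-formulas ($a < r-1$ versus $a \ge r-1$) agree at $a = r-1$, since plugging $a = r-1$ into $m_j(a-r) + m_{j+1}(a-(r-1))$ gives $m_j(-1) + m_{j+1}(0) = \dots$ — wait, one must be slightly careful with signs, but the intended reading is that the affine interpolation connects the value $m_j$ at $a=r-1$ to $m_{j+1}$ at $a=r$, so continuity at the interior breakpoint is immediate. The only remaining points of concern are the shared endpoints $a = r$ where one associated interval $[\ell,r)$ ends and the next begins: there the left-hand formula approaches $m_{j+1}$ and the right-hand formula (which starts a new interval whose constant value is $m_{j+1}$) takes the value $m_{j+1}$, so $C$ is continuous across the gluing. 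The two unbounded cases $[-\infty,1)$ and $[n,\infty)$ match up with the bounded formulas at $a=1$ and $a=n$ respectively by direct substitution ($C(1) = m_1$, $C(n) = m_p$).

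Next I would verify the stratification compatibility. One needs to see that $C$ carries each stratum of $\sf{Nat}([1,n])^{\wwedge}$ into a single stratum of $\sf{Nat}(M\setminus\{m_0\})^{\wwedge}$ — this is exactly the condition that makes $(C, f_2)$ a commuting square. The zero-strata of the source are the integer points $\{1,\dots,n\}$; the construction of the associated intervals is precisely engineered so that $C$ sends each such point to a point with a prescribed value under $f$, which is either one of the $m_j$ (a zero-stratum of the target) or lands in an open interval between consecutive $m_j$'s (a one-stratum of the target) — in either case a single stratum. The one-strata of the source are the open unit intervals $(i,i+1)$; on such an interval $C$ is affine and, by the breakpoint analysis above, it maps either entirely into a constant value $m_j$ (a target zero-stratum) when $(i,i+1)$ lies in the "flat" part $a < r-1$ of its associated interval, or entirely onto an open sub-interval $(m_j, m_{j+1})$ (a target one-stratum) when it is the "ramp" part $a \in [r-1,r)$. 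I would record the resulting assignment on posets as the definition of $f_2$ and note it is order-preserving because $C$ is (weakly) monotone and the face relations in a connected ambient stratification of $\RR$ are generated by adjacency.

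The main obstacle — really the only place demanding care rather than bookkeeping — is the boundary matching: confirming that the piecewise-affine formula is genuinely continuous at the breakpoints $a = r-1$ and at the interval-gluing points $a = r$, and simultaneously that the ramp pieces do not overshoot the target strata (i.e., that on $[r-1,r)$ the image is contained in $[m_j, m_{j+1}]$, with the endpoints handled by adjacent pieces). This is where a sign slip in Definition \ref{def:cmap} would bite, so I would state the intended affine interpolation cleanly as $C(a) = m_j\cdot(r-a) + m_{j+1}\cdot(a-(r-1))$ on $[r-1,r)$, observe $C(r-1) = m_j$ and $\lim_{a\to r^-}C(a) = m_{j+1}$, and then the rest is routine. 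Everything else — affineness on pieces, the two unbounded cases, monotonicity — is immediate once the gluing is pinned down.
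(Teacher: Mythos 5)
The paper states this lemma without proof, and your direct verification---continuity of the piecewise-affine formula at the breakpoints $a=r-1$ and $a=r$ and at $a=1$, $a=n$, together with the check that each zero- and one-stratum of the source lands inside a single stratum of the target with the induced map of stratifying posets order-preserving---is precisely the routine argument the authors leave implicit, and it is correct; you are also right that the middle case of Definition~\ref{def:cmap} as printed has a sign slip, the intended interpolation being $C(a)=m_j(r-a)+m_{j+1}(a-(r-1))$. The one point you wave through as ``immediate'' that deserves a word is the unbounded pieces: as printed, $C(a)=am_1$ on $(-\infty,1)$ and $C(a)=am_p/n$ on $(n,\infty)$ are increasing onto $(-\infty,m_1)$ and $(m_p,\infty)$ only when $m_1>0$ and $m_p>0$; if $m_1\le 0$ the image of $(-\infty,1)$ meets several target strata, so one should either assume positive filtration values or replace these cases by any increasing affine map onto $(-\infty,m_1)$ (resp.\ $(m_p,\infty)$). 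That is a defect of the printed formulas rather than of your argument, which otherwise supplies exactly what the lemma requires.
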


\begin{figure}[h!] 
\centering
\includegraphics[width=.9\textwidth]{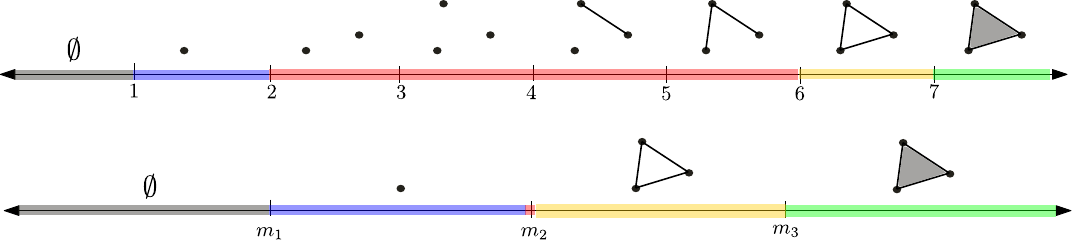}
\begin{caption}{An example of the map $C$. The relevant interval for the point, e.g., $m_2$ is~$[2,7)$, since the image of each simplex added in that interval under the filter function~$f$ is $m_2$. Then $[2,6)$ is mapped to $m_2$ and $[6,7)$ is mapped to $[m_2, m_3)$.\label{fig:cmap}}
\end{caption}
\end{figure}

%\begin{remark}
%In the case that domain and codomain of $C$ are both the same index filtration, the map $C$ is the identity map on $(\RR, \sf{Nat}([1,n])^{\wwedge})$.
%\end{remark}

\subsubsection{Augmented Descriptors via Index Filtrations}\label{sect:aug}

Let $\{K_{m_j}\}_{m_j \in M}$, $f$, and $\{K'_i\}_{i \in [0,n]}$ be a monotone filtration and compatible index filtration, respectively (as in the previous section).

Given a monotone filtration, we are perhaps interested in the so-called \emph{instantaneous events} that are captured in augmented topological descriptors, a remnant of the fact that many standard algorithms to produce descriptors for monotone filtrations are often actually employing compatible index filtrations. For example, an instantaneous $n$-dimensional homology event at time $m_j$ records the presence of an $n$-boundary that was not mapped from a boundary or cycle in the inclusion $K_{m_{j-1}} \hookrightarrow K_{m_{j}}$.

Note that many applications of TDA, such as the classic application of manifold learning through a Vietoris-Rips filtration, discard events with a short lifespan because they may be attributed to noise, so non-augmented persistence diagrams are the traditional tool of choice (see~\cite{chazal2013bootstrap}). However, recent developments in areas such as shape comparison and inverse TDA problems~(see, e.g.,~\cite{belton2020reconstructing}) rely on instantaneous events for efficient representation of simplicial or cubical complexes, particularly when the filtration used is directional (e.g., height filtration, lower-star filtration, etc.)

We aim to track both instantaneous and non-instantaneous events at every step of a monotone filtration. We introduce $A_n$ to account for instantaneous events (the extra data of an augmented module). Let $\beta_{m_j}$ denote the free group on $n$-dimensional boundaries of $K_{m_j}$ and let $\kappa_{m_{j-1}}$ denote the kernel of the map on $n$-dimensional homology induced by the inclusion $K_{m_{j-1}}\hookrightarrow K_{m_j}$. Since $\kappa_{m_{j-1}}$ corresponds to all cycles of $K_{m_{j-1}}$ that become boundaries in $K_{m_j}$, i.e., all cycles of $K_{m_{j-1}}$ that map to elements of $\beta_{m_j}$, the subgroup $\kappa_{m_{j-1}}$ can naturally be identified with a subgroup of $\beta_{m_j}$. Furthermore, since boundaries of $K_{m_{j-1}}$ are mapped injectively to boundaries of $K_{m_j}$, the subgroup $\beta_{m_{j-1}}$ is also naturally identified with a subgroup of $\beta_{m_j}$.
Then, we define:
\begin{equation}\label{eqn:monotoneaug}
A_n(K_{m_j}) =   \beta_{m_j}/  (\beta_{m_{j-1}} + \kappa_{m_{j-1}}).
\end{equation}
Note that, since $A_n(K_{m_j})$ is a quotient of free groups, and since the generators of $\beta_{m_{j-1}}$ and $\kappa_{m_{j-1}}$ are subsets of the generators of $\beta_{m_j}$, $A_n(K_{m_j})$ is free.
It may be helpful to think of $A_n(K_{m_j})$ as the free group on $n$-boundaries of $K_{m_j}$ that are not the images of boundaries or cycles in $K_{m_{j-1}}$. An instantaneous event in a monotone filtration is the appearence of an $n$-boundary that was not a boundary or cycle in the previous step of the filtration, meaning the rank of $A_n(K_{m_j})$ is the number of points (counting multiplicity) on the diagonal $(m_j, m_j)$ in the corresponding standard $n$-dimensional augmented persistence diagram.
We can also view $A_n$ as a repackaging of the ``entire'' information in index filtrations, independent of the choice of compatible index filtration. The connection to compatible index filtrations is made explicit in the following lemma.

\begin{lemma}\label{lem:coker}
Suppose that $\{K_{m_j}\}_{m_j \in M}$ is a monotone filtration corresponding to a filter function $f$ and $\{K'_i\}_{i \in [0,n]}$ is any compatible index filtration. Let $\kappa_i'$  denote the kernel of the map induced on homology in degree $n$ by the inclusion $K'_i \hookrightarrow K'_{i+1}$. Furthermore, let $\kappa'_{\hookrightarrow}$ denote the kernel of the map induced on homology in degree $n$ by the composition of inclusions $K'_{(\min C^{-1}(m_j))-1} \hookrightarrow \ldots \hookrightarrow K'_{\max C^{-1}(m_j)} $, where $C$ is as in Definition \ref{def:cmap} and Figure \ref{fig:cmap}. Then:
\begin{equation}\label{eqn:indexaug}
A_n(K_{m_j}) \cong \bigg( \bigoplus_{i = (\min C^{-1}(m_j))-1}^{\max C^{-1}(m_j)} \kappa_i' \bigg) / \kappa'_{\hookrightarrow}.
\end{equation}
\end{lemma}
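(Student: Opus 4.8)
The plan is to unwind both sides of \eqref{eqn:indexaug} as subquotients of the group of $n$-boundaries $\beta_{m_j}$ of the final complex added in the block $C^{-1}(m_j)$, and to compare them directly. Write $[\ell, r)$ for the index interval with $C^{-1}(m_j) = [\ell, r)$, so $K'_{\ell-1} = K_{\min C^{-1}(m_j) - 1}$ is the last complex before the block and $K'_{r-1} = K_{\max C^{-1}(m_j)}$ is the last complex in the block. Note $\beta_{m_j}$ (as used in \eqref{eqn:monotoneaug}) equals the group of $n$-boundaries of $K'_{r-1}$, and $\beta_{m_{j-1}}$ equals the group of $n$-boundaries of $K'_{\ell-1}$. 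The first step is to observe that, because an index filtration adds a single simplex at each step, the increment $\beta(K'_{i+1})/\beta(K'_i)$ is either $0$ or one-dimensional: it is one-dimensional exactly when $\sigma_{i+1}$ is an $n$-simplex that is added as a new boundary, which happens precisely when the inclusion $K'_i \hookrightarrow K'_{i+1}$ kills a class in $H_n$, i.e.\ exactly when $\kappa_i \neq 0$. So one gets a canonical identification $\kappa_i \cong \beta(K'_{i+1})/\beta(K'_i)$ for each $i$, and telescoping over $i = \ell-1, \dots, r-1$ gives a filtration of $\beta(K'_{r-1})/\beta(K'_{\ell-1}) = \beta_{m_j}/\beta_{m_{j-1}}$ whose associated graded is $\bigoplus_{i=\ell-1}^{\max C^{-1}(m_j)} \kappa_i$. (This telescoping is where one must be slightly careful that the $\kappa_i$ really are the successive quotients and not merely abstractly isomorphic; I would make the identification via the explicit new boundary $\partial \sigma_{i+1}$.)

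The second step is to identify the denominator. On the monotone side the denominator is $\beta_{m_{j-1}} \kappa_{m_{j-1}}$ inside $\beta_{m_j}$; modding $\beta_{m_j}$ first by $\beta_{m_{j-1}}$ and then by the image of $\kappa_{m_{j-1}}$, the residual quotient to kill is the image of $\kappa_{m_{j-1}}$ in $\beta_{m_j}/\beta_{m_{j-1}} \cong \bigoplus_{i=\ell-1}^{r-1}\kappa_i$. Here $\kappa_{m_{j-1}}$ is the kernel of $H_n(K_{m_{j-1}}) \to H_n(K_{m_j})$, which by construction of the monotone-from-index setup is the kernel of $H_n(K'_{\ell-1}) \to H_n(K'_{r-1})$, i.e.\ it is exactly what the lemma calls $\kappa_{\hookrightarrow}$ (the kernel of the composite over the block). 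So I need to show: the image of $\kappa_{\hookrightarrow}$ under the map "a cycle in $K'_{\ell-1}$ that becomes a boundary in $K'_{r-1}$ $\mapsto$ the corresponding element of $\beta_{m_j}/\beta_{m_{j-1}}$" coincides with the copy of $\kappa_{\hookrightarrow}$ being quotiented on the right-hand side of \eqref{eqn:indexaug}. This is essentially a diagram chase with the short exact sequences $0 \to Z_n \to C_n \xrightarrow{\partial} B_{n-1} \to 0$ and the inclusion-induced maps, together with the standard fact that a cycle $z$ in $K'_{\ell-1}$ lies in $\kappa_{\hookrightarrow}$ iff $z = \partial c$ for some chain $c$ in $K'_{r-1}$, and then $z \mapsto \partial c \bmod \beta_{m_{j-1}}$ is well-defined precisely modulo boundaries already present in $K'_{\ell-1}$.

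Combining the two steps: $\beta_{m_j}/(\beta_{m_{j-1}}\kappa_{m_{j-1}})$ is the quotient of $\beta_{m_j}/\beta_{m_{j-1}} \cong \bigoplus_{i}\kappa_i$ by the image of $\kappa_{m_{j-1}} = \kappa_{\hookrightarrow}$, which is exactly the right-hand side of \eqref{eqn:indexaug}; hence the isomorphism, and by the remark following \eqref{eqn:monotoneaug} it is manifestly independent of the chosen compatible index filtration. I expect the main obstacle to be the bookkeeping in the second step — making precise that the two a priori different incarnations of $\kappa_{\hookrightarrow}$ (one as a subgroup of $H_n(K'_{\ell-1})$, one as a subquotient of $\bigoplus_i \kappa_i$) are carried to one another by the telescoping isomorphism of step one, rather than the underlying homological algebra, which is routine. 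A clean way to organize this is to prove the whole statement at chain level first: work with $B_n(K'_{r-1})$, its subgroup $B_n(K'_{\ell-1})$, and the subgroup generated by boundaries of chains witnessing classes in $\kappa_{\hookrightarrow}$, show these match the building blocks above, and only pass to the stated homology-level formula at the end.
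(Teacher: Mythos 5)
Your proposal is correct and follows essentially the same route as the paper's proof: both use the fact that each index step adds a single simplex to identify $\bigoplus_i \kappa_i$ with the new boundaries $\beta_{m_j}/\beta_{m_{j-1}}$, and both identify $\kappa_{\hookrightarrow}$ with $\kappa_{m_{j-1}}$ via $K'_{(\min C^{-1}(m_j))-1}=K_{m_{j-1}}$ and $K'_{\max C^{-1}(m_j)}=K_{m_j}$ before quotienting; your chain-level bookkeeping only makes explicit identifications that the paper treats informally. One small slip to fix: the steps contributing a new $n$-boundary (equivalently a nontrivial $\kappa_i$) are those adding an $(n+1)$-simplex whose boundary $\partial\sigma_{i+1}$ was not already a boundary, not those adding an $n$-simplex.
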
 

\begin{proof}
Recall that $A_n$ as defined in Equation \ref{eqn:monotoneaug} is a free group, so we first show the right side of Equation \ref{eqn:indexaug} is also a free group, and then show the desired isomorphism through a counting argument.

 We observe that generators of $\kappa'_{\hookrightarrow}$ correspond to cycles of $K'_{(\min C^{-1}(m_j))-1}$ that become boundaries somewhere along the composition of inclusions. Consider such a cycle and suppose that $K'_i$ is the last subcomplex in the filtraton where this cycle is still not a boundary -- then the cycle is naturally identified with a generator of $\kappa_i'$, since the cycle becomes a boundary in $K'_i \hookrightarrow K'_{i+1}$. This is true for each generator of $\kappa'_{\hookrightarrow}$, so we may view $\kappa'_{\hookrightarrow}$ as a subgroup of the sum in Equation \ref{eqn:indexaug}. Since the right side of Equation \ref{eqn:indexaug} is a quotient of free groups, where the generators of $\kappa'_{\hookrightarrow}$ are a subset of generators of the sum, the right side of the equation is a free group. We may therefore proceed by showing the left and right side of Equation \ref{eqn:indexaug} have equal rank.

Each step in an index filtration adds a single simplex, so either $\kappa_i' \cong \FF^0$ (if the simplex added does not fill in any $n$-cycle) or $\kappa_i' \cong \FF^1$ (if the simplex added in $K_i' \hookrightarrow K_{i+1}'$ fills in an $n$-cycle). Thus, the direct sum in the equation above has nontrivial terms only for values of $i$ such that $K'_i \hookrightarrow K'_{i+1}$ witnesses the death of $n$-cycles in the index filtration. Recall that $[\min C^{-1}(m_j), \max (C^{-1}(m_j))+1)$ is the maximum interval whose image under the filter $f$ is $m_j$. This means that, shifting to the left, we can identify $K'_{(\min C^{-1}(m_j))-1} = K_{m_{j-1}}$ and $K'_{\max C^{-1}(m_j)}= K_{m_{j}}$. Thus, every boundary of $K_{m_j}$ that was not present as a boundary in $K_{m_{j-1}}$ is introduced or becomes a boundary in some step of the index filtration between the values $(\min C^{-1}(m_j))-1$ and $\max C^{-1}(m_j)$, which means terms of the direct sum above are nontrivial only when boundaries are created. This is exactly the count of boundaries introduced in the inclusion $K_{m_{j-1}}\hookrightarrow K_{m_j}$, i.e., it is $\beta_{m_j}/\beta_{m_{j-1}}$, using the notation previously introduced in the paragraph above and Equation \ref{eqn:monotoneaug}. However, recall that $A_n({K_{m_j}})$ does not account for boundaries that fill in a cycle from a previous step in the filtration. Thus, we quotient out by the kernel of the composition of maps between $\min C^{-1}(m_j)$ and $\max (C^{-1}(m_j))+1$. This kernel is generated by boundaries and cycles of $K_{m_{j-1}}$ that are mapped to boundaries in $K_{m_{j}}$.
Since $K'_{(\min C^{-1}(m_j))-1} = K_{m_{j-1}}$ and $K'_{\max C^{-1}(m_j)}= K_{m_{j}}$, and since the index filtration is compatible with the monotone filtration, we see that $\kappa'_{\hookrightarrow} \cong \kappa_{m_{j-1}}$. Thus, the rank of the right side of Equation \ref{eqn:indexaug} is exactly the rank of the $A_n$ as defined in Equation \ref{eqn:monotoneaug}, and as both are free groups, we have shown the desired isomorphism.
\end{proof}

\begin{example}
Suppose that $\{K_{m_i}\}$ and $\{K'_i\}$ are monotone and index filtrations as in the bottom and top of Figure \ref{fig:cmap}. Then $A_0(K_{m_2}) \cong \beta_{m_2}/ (\beta_{m_1} +\kappa_{m_1}) \cong \FF^2 / \FF^0 \cong \FF^2$. Computed using the identification of Lemma \ref{lem:coker}, we see that this is the same as $\bigoplus_{i = 1}^6 \kappa_i' /\kappa'_{\hookrightarrow} \cong (\FF^0 \oplus \FF^1 \oplus \FF^1 / \FF_0 )\cong \FF^2$.
\end{example}

The following cosheaf organizes the information of both instantaneous and non-instantaneous events.

\begin{definition}[Augmented Filtration Cosheaf on $\RR$]
Let $\{K_{m_j}\}_{m_j \in M}$ be a monotone filtration of a simplicial complex $K$, and suppose $\RR$ is stratified by $\sf{Nat}(M \setminus \{m_0\})^{\wwedge}$. 
We define the \emph{augmented filtration cosheaf on $\RR$}, $F_A:  \sf{Opens}(\RR) \to \Vect$, on metric $\epsilon$-balls as follows.
$$F_A(U) = \begin{cases}
  H_n(K_{m_{j}}) \oplus A_n(K_{m_j}),  \text{   if   } m_{j-1} \in U  \\
  H_n(K_{m_j}) \text{   if   } U \subset (m_j, m_{j+1}) \text{   or   } U \subset (m_j=m_p, \infty) \\
  H_n(K_{m_1}) \text{   if   } U \subset  (-\infty, m_1)
\end{cases}.$$ 
\end{definition}

Observe that the above definition implies that the costalk at a zero-stratum $m_{j-1}$ of $(\RR, \sf{Nat}(M \setminus \{m_0\})^{\wwedge})$ is $H_n(K_{m_{j-1}}) \oplus A_n(K_{m_j})$. 

\begin{remark}
For an index filtration $\{K'_i\}_{i \in I}$, any new $n$-cycles introduced through the map $ K'_{i-1} \hookrightarrow K'_{i}$ are not $n$-boundaries, since the boundaries and interiors of simplices are added at distinct filtration events. Thus, $A_n(K'_i)$ is trivial, i.e., the augmented filtration cosheaf that arises from an index filtration is equivalent to its (non-augmented) filtration cosheaf. 
\end{remark}

An instance of the previous remark is illustrated by following example.

\begin{example}
Let $\{K'_i\}$ be the index filtration in the top of Figure \ref{fig:cmap}. Notice that the one-dimensional costalk of the non-augmented filtration cosheaf at $7$ is $H_1(K'_7) \cong \FF^0$, which is isomorphic to $H_1(K'_7) \oplus A_1(K'_7) \cong \FF^0 \oplus \FF^1/\FF^1 \cong \FF^0$.
\end{example}

The stratified map $C$ define above provides a clean interpolation between the augmented, non-augmented, and index cosheaves associated to a filtration. 

\begin{proposition}
Let $\cF_M$ and $\cF_A$ be the non-augmented and augmented filtration cosheaves for some monotone filtration $\{K_{m_j}\}_{m_j \in M}$ and let $\cF_I$ be the filtration cosheaf for a compatible index filtration $\{K_i\}_{i \in [0,n]}$. Let $C: (\RR, \sf{Nat}([1,n])^{\wwedge}) \to (\RR, \sf{Nat}(M \setminus \{m_0\})^{\wwedge})$ be the map of stratified spaces as above. Then,
\begin{itemize}
\item[(i)] We have an isomorphism of cosheaves $C_\ast \cF_I \cong \cF_M$;
\item[(ii)] Let $U \subset \RR$ be open such that $U \cap M = \emptyset$, then  $\cF_M (U) \cong \cF_A (U)$.
\end{itemize}
\end{proposition}

\begin{proof}
That $\cF_M$ and $\cF_A$ agree on one-strata follows directly from their definitions (they can differ at zero-strata). In claim (i), there are two parts: that $C_\ast \cF_I$ is constructible and that $C_\ast \cF_I$ is isomorphic to $\cF_M$. To prove the first, note that $C$ is a composition of ``elementary collapses" as described in Example \ref{ex:Cmap}, so by functoriality $C_\ast \cF_I$ is constructible. The second part of (i) is an explicit unwinding of the definition of the pushforward.
\end{proof}

\subsection{An Equivalence Result}

In this subsection we make explicit the relationship between zig-zag modules as put forth by Carlsson--Zamorodian and representations of the entrance path category of $\RR$ stratified by the natural numbers. In the process we will need to equip our zig-zag modules with additional structure, which we call ``markings."

Let $\mathsf{Poset}_I$ be the category of posets with Hasse diagrams homeomorphic to the interval, half-closed interval or $\RR$ and whose underlying set is at most countable. Morphisms in $\mathsf{Poset}_I$ are surjective maps of posets. So from above, the 
category of zig-zag modules is the category of pairs $(\cP , \rho)$ with $\cP \in \mathsf{Poset}_I$ and $\rho \colon \cP \to \Vect$ a representation of $\cP$.

\begin{definition}
Define the poset $\cZ \cZ_\NN$ to have objects $\frac{1}{2} \NN$ with non-identity morphisms
\[ \frac{a}{2} \le \frac{a+1}{2} \quad \text{ and } \quad \frac{a}{2} \le \frac{a-1}{2} \quad \text{ for all } a \in \NN, \text{ with } a \text{ odd}.
\]
\end{definition}

The poset $\cZ \cZ_\NN$ arises naturally when considering $\RR$ stratified (ambiently) by the natural numbers.

\begin{lemma}\label{lem:entrance}
There is a canonical isomorphism of categories
\[
\cZ \cZ_\NN \cong \mathsf{Ent}_\Delta (\RR, \Nat(\NN)^{\wwedge}).
\]
\end{lemma}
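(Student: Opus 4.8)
The plan is to exhibit an explicit bijection on objects and on generating morphisms, and then check it respects composition. First I would recall that $(\RR, \NN)^{\wwedge}$ has a zero-stratum for each $n \in \NN$ and a one-stratum for each connected component of $\RR \setminus \NN$; these components are the open intervals $(n, n+1)$ for $n \in \NN$, together with the unbounded component $(-\infty, 0)$. So the strata of $(\RR,\NN)^{\wwedge}$ are naturally indexed by $\NN$ (the zero-strata, say at even half-integers) and by $\NN$ again (the one-strata, say at odd half-integers), which matches the underlying set $\tfrac{1}{2}\NN$ of $\cZ\cZ_\NN$ under the assignment sending the zero-stratum at $n$ to $\tfrac{2n}{2} = n$ and the one-stratum $(n,n+1)$ to $\tfrac{2n+1}{2}$. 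I would define $\Phi\colon \cZ\cZ_\NN \to \mathsf{Ent}_\Delta((\RR,\NN)^{\wwedge})$ on objects by this rule. (A small bookkeeping point: one must decide where the unbounded component $(-\infty,0)$ goes; it is the one-stratum whose only boundary zero-stratum is $0$, so it should correspond to $\tfrac{1}{2}$, i.e. the assignment above still works once one checks there is no one-stratum ``below'' $0$.)

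Next I would check morphisms. By definition, $\mathsf{Ent}_\Delta$ has a morphism $\sigma \to \tau$ exactly when $\tau$ is a face of $\sigma$; for a one-dimensional combinatorial manifold the only nontrivial face relations are: each one-stratum $(n,n+1)$ has the two zero-strata $\{n\}$ and $\{n+1\}$ as faces (and $(-\infty,0)$ has only $\{0\}$ as a face). Translating through $\Phi$, the generating non-identity morphisms of $\mathsf{Ent}_\Delta$ are precisely $\tfrac{2n+1}{2} \to \tfrac{2n}{2}$ and $\tfrac{2n+1}{2} \to \tfrac{2n+2}{2}$, i.e. (writing $a = 2n+1$ odd) $\tfrac{a}{2} \to \tfrac{a-1}{2}$ and $\tfrac{a}{2} \to \tfrac{a+1}{2}$ — exactly the generating morphisms listed in the definition of $\cZ\cZ_\NN$ (reading $\le$ as the direction of the morphism). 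So $\Phi$ is a bijection on objects and sends generators bijectively to generators.

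Finally I would argue $\Phi$ is an isomorphism of categories, not merely a bijection of underlying graphs. Both categories are free on their Hasse-type generating data in the relevant sense: in $\mathsf{Ent}_\Delta((\RR,\NN)^{\wwedge})$ every non-identity morphism $\sigma \to \tau$ is determined by the pair $(\sigma,\tau)$ with $\tau$ a proper face of $\sigma$, and since no one-stratum is a face of another and no zero-stratum has a proper face, there are no composable pairs of non-identity morphisms, hence no composition relations to check beyond the identities — the category is the free category on the (acyclic, height-one) face poset. The same is true of $\cZ\cZ_\NN$: its Hasse diagram has every odd half-integer covered-above-and-below by the two adjacent integers, no integer is comparable to another integer, and no two odd half-integers are comparable, so again there are no nontrivial composites. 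Therefore a graph isomorphism between the two automatically extends to an isomorphism of categories, and $\Phi$ is the desired canonical isomorphism.

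\textbf{Main obstacle.} The content here is genuinely light — the only thing requiring care is the boundary behavior at $0$ (the half-line $(-\infty,0)$ versus the bounded intervals) and fixing conventions so the indexing $\tfrac{1}{2}\NN \leftrightarrow \{\text{strata}\}$ is a clean bijection; once those conventions are pinned down, canonicity of $\Phi$ is immediate. I would expect the write-up to be essentially a labeled picture plus one sentence observing that both categories have no nontrivial composable morphisms.
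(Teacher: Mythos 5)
The paper states Lemma \ref{lem:entrance} without proof, so there is nothing to compare line-by-line; your overall strategy (bijection on objects, match the generating face morphisms, observe that neither category has composable non-identity morphisms so the bijection extends to an isomorphism) is exactly the natural argument. However, the one point of genuine content in this lemma is the boundary behavior at the unbounded component, and your treatment of it is internally inconsistent. You adopt the convention $0 \in \NN$, so you enumerate the one-strata as $(-\infty,0)$ together with the intervals $(n,n+1)$, and you assign $(n,n+1) \mapsto \tfrac{2n+1}{2}$; with $n=0$ this already sends $(0,1)$ to $\tfrac12$, yet your parenthetical then also sends $(-\infty,0)$ to $\tfrac12$ -- a collision -- and asserts ``there is no one-stratum below $0$,'' which contradicts your own enumeration. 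The problem is not merely cosmetic: with $0 \in \NN$ the two categories are genuinely non-isomorphic, because $(-\infty,0)$ is an object of $\mathsf{Ent}_\Delta((\RR,\NN)^{\wwedge})$ with exactly one non-identity morphism out of it, while in $\cZ\cZ_\NN$ (with $0$ an object) every odd half-integer has exactly two and every integer has none. So no choice of bijection can repair your assignment under that convention.

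The correct resolution is to fix the convention $\NN = \{1,2,3,\dots\}$ (which is what the paper's definition of $\cZ\cZ_\NN$ tacitly requires): then $\tfrac12\NN = \{\tfrac12, 1, \tfrac32, \dots\}$, the relation $\tfrac12 \le 0$ is vacuous since $0$ is not an object, and $\tfrac12$ is the unique odd half-integer with a single non-identity morphism (to $1$). It therefore corresponds to the unique unbounded one-stratum $(-\infty,1)$, while $n \mapsto \{n\}$ and $\tfrac{2n+1}{2} \mapsto (n,n+1)$ for $n \ge 1$. With that adjustment the rest of your argument -- matching generators and noting that both categories are height-one posets with no nontrivial composites -- goes through and yields the claimed canonical isomorphism.
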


We wish to ``mark" our posets by passing to the {\em under category} of $\cZ \cZ_\NN$, $\mathsf{Poset}^{\cZ\cZ_\NN /}_I$, i.e., we will consider posets equipped a map from $\cZ \cZ_\NN$. Passing to marked objects/the under category has the effect of replacing a given poset by all possible labelings of that poset by the natural numbers. (The notion of marking persistence modules is not at all unusual. For instance, in most applications, the passage from persistence modules to barcodes or diagrams depends on an explicit marking, e.g., the event times/parameters.) Morphisms in the under category are commutative triangles. As we will use later, passage to the under category introduces an initial object: $\mathrm{Id} : \cZ \cZ_\NN \to \cZ \cZ_\NN$.
Note that the under category is an example of a comma category and are also known as {\em coslice} categories. 

\begin{definition}
Define the category of \emph{marked zig-zag modules}, $\mathsf{ZZmod}$, to be the category of pairs $(\cZ\cZ_\NN \twoheadrightarrow \cP , \rho)$ with $\cZ\cZ_\NN \twoheadrightarrow \cP \in \mathsf{Poset}^{\cZ\cZ_\NN /}_I$ and $\rho \colon \cP \to \Vect$ a representation. A morphism is a pair $(f,\varphi) \colon (\cZ\cZ_\NN \twoheadrightarrow \cP , \rho) \to (\cZ\cZ_\NN \twoheadrightarrow \cQ , \eta)$ with $f \colon \cP \twoheadrightarrow \cQ$ defining a morphism in the under category and $\varphi \colon \rho \Rightarrow f^\ast \eta$ a natural transformation.
\end{definition}

It turns out that isomorphism in $\mathsf{ZZmod}$ is too strong to capture our preferred notion of ``sameness," so we introduce a notion of weak equivalence. An example of an operation that creates a weakly equivalent module is ``subdividing" a vertex in a poset into several vertices provided that all of the new maps in the corresponding representation are isomorphisms.

\begin{definition}
A morphism $(f,\varphi) \colon (\cZ\cZ_\NN \twoheadrightarrow \cP , \rho) \to (\cZ\cZ_\NN \twoheadrightarrow \cQ , \eta)$ in $\mathsf{ZZmod}$ is a \emph{weak equivalence} if $\varphi \colon \rho \Rightarrow f^\ast \eta$ is a natural isomorphism. Let $\cW$ denote the collection of weak equivalences.
\end{definition}

We caution the data-analytically oriented reader here; notice that weakly equivalent objects of $\mathsf{ZZmod}$ do not generally have the same indices of ``events,'' i.e., vertices at which the corresponding image of the representation changes. That is, the standard map from $\mathsf{ZZmod}$ to persistence diagrams (as described in \cite{carlsson2010zigzag}) does not factor through $\mathsf{ZZmod} [\cW^{-1}] $. However, the order and number of events is preserved.

%
%
%
%
%
%%To achieve our desired result we will need to consider \emph{localizations}, a notion that we now recall.
%%
%\begin{definition}
%Let $\sC$ and $\sE$ be categories.
%\begin{itemize}
%\item[(i)] Let $\cW$ be a class of morphisms in $\sC$. A \emph{localization} of $\sC$ at $\cW$ is a pair $(\sC[\cW^{-1}], Q \colon \sC \to \sC[\cW^{-1}] ),$ where $\sC[\cW^{-1}]$ is a category and $Q$ a functor such that
%\begin{itemize}
%\item For all $\varphi \in \cW$, $Q(\varphi)$ is an isomorphism;
%\item For any category $\sA$ and any functor $F \colon \sC \to \sA$ sending $\cW$ to isomorphisms, there exists a functor $G \colon \sC[\cW^{-1}] \to \sA$ with $F = G \circ Q$; 
%\item For any category $\sA$, the map between functor categories
%\[
%(-) \circ Q \colon \mathsf{Fun} (\sC[\cW^{-1}],\sA) \to \mathsf{Fun} (\sC, \sA) 
%\]
%is full and faithful.
%\end{itemize}
%\item[(ii)] A functor $ \Phi \colon \sC \to \sE$ is a \emph{localization} if there exists a class of morphisms $\cW_\Phi$ in $\sC$ such that the pair $(\sE, \Phi)$ is a localization of $\sC$ at $\cW_\Phi$.
%\end{itemize}
%\end{definition}

\begin{theorem}\label{thm:equivcat}
The category of (marked) zig-zag modules localized at weak equivalences is equivalent to the category of constructible cosheaves on $\RR$ stratified by the natural numbers. That is, we have an equivalence of categories
\[
\mathsf{ZZmod} [\cW^{-1}] \cong \mathsf{Fun} (\mathsf{Ent}_\Delta (\RR, \Nat(\NN)^{\wwedge}),\Vect) \simeq \mathsf{cShv}_{cbl}^{\Vect} ((\RR, \NN)^{\wwedge}).
\]
\end{theorem}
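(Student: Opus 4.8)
The plan is to build the equivalence in two stages, the second of which (the right-hand equivalence with $\mathsf{cShv}^{\Vect}_{cbl}$) is already furnished by the Exodromy Theorem together with the earlier identification $\cZ\cZ_\NN \cong \mathsf{Ent}_\Delta((\RR,\NN)^{\wwedge})$ of Lemma~\ref{lem:entrance}. So the real content is to produce an equivalence
\[
\mathsf{ZZmod}[\cW^{-1}] \;\simeq\; \mathsf{Fun}(\cZ\cZ_\NN, \Vect).
\]
First I would define a functor $R \colon \mathsf{ZZmod} \to \mathsf{Fun}(\cZ\cZ_\NN,\Vect)$ sending a marked module $(\cZ\cZ_\NN \twoheadrightarrow \cP, \rho)$ to the composite representation $\cZ\cZ_\NN \twoheadrightarrow \cP \xrightarrow{\rho} \Vect$, and a morphism $(f,\varphi)$ to the whiskered natural transformation. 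The key observation is that $R$ sends every weak equivalence to an isomorphism: if $\varphi \colon \rho \Rightarrow f^\ast\eta$ is a natural isomorphism, then precomposing with $\cZ\cZ_\NN \twoheadrightarrow \cP$ yields a natural isomorphism of representations of $\cZ\cZ_\NN$. Hence $R$ factors through the localization to give $\bar R \colon \mathsf{ZZmod}[\cW^{-1}] \to \mathsf{Fun}(\cZ\cZ_\NN,\Vect)$, and the theorem amounts to showing $\bar R$ is an equivalence.

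For essential surjectivity, given any representation $\chi \colon \cZ\cZ_\NN \to \Vect$ I would produce a preimage by taking $\cP = \cZ\cZ_\NN$ itself with the identity marking and $\rho = \chi$; then $R$ of this object is literally $\chi$. (One must check $\cZ\cZ_\NN$ lies in $\mathsf{Poset}_I$ with its Hasse diagram homeomorphic to $\RR$ and countable underlying set, which is immediate from its definition.) For fullness, given objects $x = (\cZ\cZ_\NN \twoheadrightarrow \cP,\rho)$ and $y = (\cZ\cZ_\NN\twoheadrightarrow\cQ,\eta)$ and a morphism $\theta \colon R(x) \Rightarrow R(y)$ in $\mathsf{Fun}(\cZ\cZ_\NN,\Vect)$, I would construct a roof $x \xleftarrow{w} z \xrightarrow{g} y$ in $\mathsf{ZZmod}$ with $w \in \cW$ whose image under $R$ is $\theta$: the natural choice is $z = (\cZ\cZ_\NN \xrightarrow{\mathrm{id}} \cZ\cZ_\NN, R(x))$, with $w$ given by $(\cP \twoheadleftarrow \cZ\cZ_\NN$ via the marking$, \mathrm{id})$ — this is a weak equivalence since the natural transformation component is the identity — and $g$ given by $(\cZ\cZ_\NN \twoheadrightarrow \cQ$ via the marking of $y$, $\theta)$; one checks $\theta$ is a valid natural transformation $R(x) \Rightarrow (\text{marking of }y)^\ast\eta$ precisely because $R(x) = (\text{marking of }y)^\ast(\text{rep of }y$ pulled back$)$ as functors on $\cZ\cZ_\NN$. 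Faithfulness of $\bar R$ requires showing that if two roofs have the same image under $R$, they represent the same morphism in the localization, which reduces to a calculus-of-fractions style argument.

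The main obstacle I expect is verifying that $\mathsf{ZZmod}$ admits a calculus of fractions (left or right) with respect to $\cW$, so that morphisms in $\mathsf{ZZmod}[\cW^{-1}]$ are computed by roofs of a single type and composition is manageable — without this, fullness and faithfulness of $\bar R$ become delicate. The operation flagged in the text, ``subdividing a vertex'' with isomorphism maps, is exactly the prototypical weak equivalence, and one needs that any zig-zag of such subdivisions between two markings can be replaced by a common refinement (a single further subdivision dominating both), which gives the Ore-type condition. This is where the hypothesis that morphisms in $\mathsf{Poset}_I$ are \emph{surjective} poset maps and that the Hasse diagrams are intervals or $\RR$ is essential: it forces markings $\cZ\cZ_\NN \twoheadrightarrow \cP$ to be, combinatorially, collapses of consecutive edges, so two markings always have a common coarsening obtained by collapsing the intersection of their collapsed edge-sets, and the representations agree there up to the given isomorphisms. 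I would organize this as a lemma on the structure of $\mathsf{Poset}^{\cZ\cZ_\NN/}_I$ before assembling the equivalence, then conclude by combining $\bar R$ with Exodromy and Lemma~\ref{lem:entrance}. Since these techniques differ in flavor from the rest of the paper, carrying out the details in an appendix (as the authors indicate) is the natural home for the full argument.
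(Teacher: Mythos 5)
Your route is sound but genuinely different from the paper's. The paper never builds a comparison functor by hand: it identifies $\mathsf{ZZmod}$ as the Grothendieck construction $\int \cR$ of the pseudofunctor $\cR \colon \mathsf{Poset}^{\cZ\cZ_\NN/}_I \to \mathsf{Cat}$, $(\cZ\cZ_\NN \twoheadrightarrow \cP) \mapsto \mathsf{Fun}(\cP,\Vect)$, checks that the weak equivalences are exactly the cartesian morphisms for the projection $\int\cR \to \mathsf{Poset}^{\cZ\cZ_\NN/}_I$, and then uses the general facts that $\int\cR$ is the lax colimit of $\cR$, that localizing at cartesian morphisms yields the pseudocolimit, and that a pseudocolimit over a category with an initial object --- here $(\cZ\cZ_\NN \xrightarrow{\mathrm{Id}} \cZ\cZ_\NN)$ --- is just the value there, namely $\mathsf{Fun}(\cZ\cZ_\NN,\Vect)$; Lemma \ref{lem:entrance} and exodromy finish as in your first paragraph. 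That machinery buys an isomorphism (not merely an equivalence) of categories and places the result inside SGA4-style descent, at the cost of 2-categorical overhead. Your proof is elementary and explicit, and it does go through; in fact it is easier than you anticipate. The object $z_x = (\cZ\cZ_\NN \xrightarrow{\mathrm{id}} \cZ\cZ_\NN, R(x))$ you introduce for fullness defines a section $s$ of $R$ with $R\circ s = \mathrm{id}$, and your map $w_x \colon z_x \to x$ (marking of $x$, identity 2-cell) is \emph{natural} in $x$: for $(f,\varphi)\colon x \to y$ both composites $z_x \to y$ have poset part $m_y$ and 2-cell $m_x^\ast\varphi = R(\varphi)$. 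So $s\circ R \Rightarrow \mathrm{id}_{\mathsf{ZZmod}}$ is a natural transformation with components in $\cW$, whence $\bar s$ and $\bar R$ are mutually inverse equivalences on $\mathsf{ZZmod}[\cW^{-1}]$ --- faithfulness comes for free and no calculus of fractions is needed. If you do want the fractions route, note that the Ore-type conditions are verified not by finding a common \emph{coarsening} of two markings (your sketch is muddled on this point) but by refining through the universal finest marking $\cZ\cZ_\NN$ itself: given $f\colon x \to y$ and $v\colon z \to y$ in $\cW$, the square is completed by $z_x \to x$ and $z_x \to z$ with 2-cell $(m_z^\ast\varphi_v)^{-1}\circ R(\varphi_f)$, using that $\varphi_v$ is invertible; the cancellation axiom is checked the same way. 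With that correction, either finish of your argument is complete and constitutes a legitimately more hands-on alternative to the paper's appendix.
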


The second equivalence is just an example of the exodromy equivalence. The first equivalence, which is actually an isomorphism of categories, is proved in Appendix \ref{app:B}. There are some technicalities in proving the previous theorem, but the main idea of the equivalence  is as follows. Let $\varphi \colon \cP \to \Vect$ be a representation of $\cP$. Pullback~$\varphi$ along the map $\cZ \cZ_\NN \to \cP$ to obtain a representation of $\cZ \cZ_\NN$ so that, via Lemma \ref{lem:entrance}, we have a representation of the corresponding entrance path category, i.e., a constructible cosheaf.

\section{K-theory of Zig-Zag Modules}\label{sect:ktheory}

We now shift gears and compute the K-theory of the category of zig-zag modules. The category in which our modules take values plays a central role and we consider two different constructions:  one for modules valued in vector spaces and one for set-valued modules . To begin, we work with an arbitrary combinatorial manifold as parameter space and only later specialize to the case where it is one-dimensional. When our parameter space is one-dimensional, it's combinatorial entrance path category is a zig-zag poset and hence a representation is a zig-zag module.

Motivated by the Exodromy Theorem and our equivalence result above, we make the following definition.

\begin{definition}\label{def:kofzz}
Let $(X \xrightarrow{\phi} \cP)$ be a combinatorial manifold with its native stratification, $\mathsf{Ent}_\Delta (X, \cP)$ its combinatorial entrance path category, and   $V$ any category. The {\em category of $V$-valued persistence modules parameterized by $X$}, $\mathsf{pMod}^{V}(X)$, is given by
\[
\mathsf{pMod}^{V}(X):=\mathsf{Fun} (\mathsf{Ent}_\Delta (X, \cP),V).
\]
Hence, the \emph{$K$-theory of $V$ valued persistence modules (parametrized by $X$)} is the $K$-theory spectrum (whenever it exists) of the category above: $\KK(\mathsf{pMod}^{V}(X))$.
\end{definition}

%%%%%%%%%%%%%%%%%%%%%%%%%%%%

\subsection{$K$-Theory of $\Vect$-Valued coSheaves}

The category of finitely generated modules for a commutative ring is an Abelian category, so we  define/compute K-theory using the work of Quillen and Waldhausen. (If our ring is a field, we recover our old friend $\Vect$).    In this section we will freely use the material of Appendix \ref{app:A}.

\begin{lemma}\label{lem:key}
Let $R$ be a commutative ring, $\cM$ the associated Waldhausen category of finitely generated modules, $X$ a combinatorial manifold, and $x_0 \in X$ a connected zero-dimensional stratum, i.e., a point that is a stratum. The following sequence is split short exact sequence of Waldhausen categories
\[
\xymatrix{\mathsf{Fun} (\mathsf{Ent}_\Delta (X \setminus x_0),\cM)\ar[r]_{\quad j_\ast} 
&
\mathsf{Fun} (\mathsf{Ent}_\Delta (X),\cM) \ar[r]_{i^\ast} \ar@/_1.0pc/[l]_{\quad j^\ast} &
 \mathsf{Fun} (\mathsf{Ent}_\Delta (x_0),\cM) 
\ar@/_1.0pc/[l]_{i_\ast},
}
\]
where $i \colon x_0 \hookrightarrow X$ and $j \colon X \setminus x_0 \hookrightarrow X$ are the inclusion maps.
\end{lemma}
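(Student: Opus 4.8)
The plan is to verify the axioms for a split short exact sequence of Waldhausen categories (as recalled in Appendix \ref{app:A}) for the diagram in question, using the fact that $\mathsf{Ent}_\Delta$ of a stratified space decomposes nicely when we remove an open stratum. The key structural input is that $X \setminus x_0$ is an open substratified subspace whose combinatorial entrance path category $\mathsf{Ent}_\Delta(X \setminus x_0)$ is a full subcategory of $\mathsf{Ent}_\Delta(X)$: its objects are all strata of $X$ other than $x_0$, and a morphism $\sigma \to \tau$ (meaning $\tau$ is a face of $\sigma$) survives precisely when neither endpoint is $x_0$. Since $x_0$ is a zero-dimensional stratum, it is a minimal object (a face of various higher strata but has no proper faces), so removing it removes exactly the morphisms with target $x_0$. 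Dually $\{x_0\}$ with its trivial entrance path category is the full subcategory on the single object $x_0$, and $i^\ast$ is restriction (evaluation at $x_0$).

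First I would identify the functors. The restriction $i^\ast \colon \mathsf{Fun}(\mathsf{Ent}_\Delta(X),\cM) \to \mathsf{Fun}(\{x_0\},\cM) \cong \cM$ evaluates a representation at the stratum $x_0$; it has a right adjoint $i_\ast$ given by right Kan extension, which since $x_0$ is minimal is the ``skyscraper'' placing the given module at $x_0$ and $0$ elsewhere (the relevant comma categories for the Kan extension over any other stratum $\sigma$ are empty because there is no morphism $\sigma \to x_0$ unless $x_0$ is a face of $\sigma$; one checks the Kan extension formula directly — here is where one uses minimality so the extension is literally a skyscraper). The functor $j^\ast \colon \mathsf{Fun}(\mathsf{Ent}_\Delta(X),\cM) \to \mathsf{Fun}(\mathsf{Ent}_\Delta(X\setminus x_0),\cM)$ is restriction along the full inclusion of categories, and $j_\ast$ is its left adjoint, extension by zero (left Kan extension along the inclusion, which again because of minimality of $x_0$ sets the value at $x_0$ to be $0$ — or more precisely the colimit over strata mapping to $x_0$, which is only $x_0$ itself in $X \setminus x_0$... actually $x_0 \notin X \setminus x_0$, so the comma category is empty and the value is $0$, giving genuine extension by zero). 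I would then verify $j^\ast j_\ast \cong \mathrm{id}$ and $i^\ast i_\ast \cong \mathrm{id}$ (both immediate from the Kan extension formulas and fullness), that $i^\ast j_\ast = 0$ and $j^\ast i_\ast = 0$ (extension by zero is zero at $x_0$; skyscraper at $x_0$ is zero away from it), and that the sequence is exact in the Waldhausen sense: an object $F$ with $i^\ast F = 0$ is in the essential image of $j_\ast$, and the cofiber sequences match up. One should also check all four functors are exact, i.e., preserve the Waldhausen structure (cofibrations = objectwise admissible monomorphisms, weak equivalences = objectwise quasi-isomorphisms or isomorphisms); this is routine because Kan extensions along inclusions of posets/categories are exact when computed by the relevant (co)limits, which here are finite (co)products or zero.

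The main obstacle I expect is bookkeeping around the two Kan extensions and confirming that they are as simple as claimed — in particular that $i_\ast$ really is the skyscraper and $j_\ast$ really is extension-by-zero, rather than something involving a nontrivial colimit or limit over adjacent one-strata. This is exactly where the hypothesis that $x_0$ is a \emph{zero-dimensional} stratum (hence minimal in the face poset, with no proper faces) is essential: it guarantees the comma categories indexing the Kan extensions over every other stratum are either empty or terminal, collapsing the (co)limits to $0$ or the identity. Once that is pinned down, exactness of the sequence and the splitting identities fall out formally, and the statement follows. I would present this as: (1) describe the four functors and their adjunctions via Kan extension; (2) use minimality of $x_0$ to compute $i_\ast$ and $j_\ast$ explicitly; (3) verify the splitting identities and vanishing composites; (4) verify exactness of the sequence and exactness of each functor.
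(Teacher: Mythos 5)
Your plan is structurally the same as the paper's (identify the four functors as restrictions and one-sided Kan extensions, check full faithfulness, the vanishing composite, the cokernel identification, exactness, and the unit/counit isomorphisms), but there is a concrete error at exactly the nontrivial point: your computation of the comma categories contradicts the morphism convention you yourself set up. In $\mathsf{Ent}_\Delta(X)$ a morphism goes $\sigma \to \tau$ when $\tau$ is a face of $\sigma$, so the zero-stratum $x_0$ is a \emph{sink}: every stratum $\sigma$ in the star of $x_0$ has a (unique) morphism $\sigma \to x_0$, and $x_0$ has no non-identity morphisms out. Hence the comma category computing $(\mathrm{Ran}_i W)(\sigma)$ is a singleton for every such $\sigma$, not empty, so the right Kan extension along $i$ is \emph{not} the skyscraper; it is the functor with value $W$ on the closed star of $x_0$ and $0$ elsewhere. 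Likewise the comma category computing $(\mathrm{Lan}_j G)(x_0)$ consists of all strata of $X \setminus x_0$ adjacent to $x_0$, so the left Kan extension along $j$ has a (generally nonzero) colimit over the star as its value at $x_0$; it is not extension by zero. With this convention, extension by zero is the \emph{right} adjoint of $j^\ast$ (mapping \emph{into} an extension-by-zero object imposes no condition at $x_0$, whereas mapping out of one forces the composites $G(\sigma) \to F(\sigma) \to F(x_0)$ to vanish), and the skyscraper is the \emph{left} adjoint of $i^\ast$. As written, the adjunctions you invoke are attached to the wrong functors, so the verifications you build on them (``unit of $j_\ast \dashv j^\ast$ is an isomorphism,'' ``$j^\ast i_\ast = 0$,'' the identification of $i_\ast$) do not go through.

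The statement and your outline survive once the handedness is repaired: take $j_\ast$ to be extension by zero and $i_\ast$ an adjoint of evaluation at $x_0$ (skyscraper on one side, closed-star-constant on the other; all of these functors are exact, and one still has $j^\ast j_\ast \cong \mathrm{Id}$, $i^\ast i_\ast \cong \mathrm{Id}$, $i^\ast j_\ast = 0$, with $j_\ast$ fully faithful and $i^\ast$ exhibiting the quotient). But you must then record on which side each adjunction actually lives and check that it matches the sidedness demanded by Definition \ref{defn:standard} and the splitting conditions in Appendix \ref{app:A}; this is not cosmetic, since the direction of the canonical natural transformation relating $j_\ast j^\ast \cF$ and $\cF$ (used in Lemma \ref{lem:standard}) depends on it. The paper's own proof is the same in outline as yours --- it cites Lemma \ref{lem:toWald} for the Waldhausen structures, notes the functors are exact, and deduces split exactness from the adjunction isomorphisms, the vanishing composite, and the cokernel identification --- so the missing piece in your write-up is precisely a correct identification of which Kan extension is the skyscraper and which is extension by zero relative to the stated entrance-path convention.
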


\begin{proof}
The content of Lemma \ref{lem:toWald} is precisely that the three categories appearing are Waldhausen. We next observe that the inverse and direct image functors (in this setting) are compatible with the equivalences and cofibrations, so indeed we have a sequence of exact functors.

It is standard that $i_\ast$ is right adjoint to $i^\ast$ and in this case, the counit of the adjunction is a natural isomorphism. Because our domain categories are discrete (finite even), $j_\ast$ is indeed left adjoint to $j^\ast$ and the unit is a natural isomorphism; $j_\ast$ is the extension by zero map. The composition $i^\ast \circ j_\ast$ is manifestly the zero functor and $i^\ast$ presents $ \mathsf{Fun} (\mathsf{Ent}_\Delta (x_0),\cM)$ as the cokernel of $j_\ast$. In summary, the sequence is  short exact and the adjointness properties we observed further show it is split.

\end{proof}

\begin{remark}
The preceding lemma is straightforward as we are considering constructible cosheaves  on the complement of a point (which is open). One could try to prove a version of the lemma above where $x_0$ is replaced by an arbitrary stratum and at the level of non-combinatorial entrance path categories, but---in general---this fails as $j^\ast$ will not have the appropriate adjointness properties. There is, however, a corresponding lemma for an arbitrary closed/open complement decomposition that is compatible with the stratification.
\end{remark}

\begin{lemma}\label{lem:standard}
The split short exact sequence of Lemma \ref{lem:key}  is standard.
\end{lemma}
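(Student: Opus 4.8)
The plan is to unwind the definition of a \emph{standard} split short exact sequence of Waldhausen categories (recalled in Appendix~\ref{app:A}) and verify its conditions for the sequence produced in Lemma~\ref{lem:key}. Recall that a split exact sequence $\cA \xrightarrow{f} \cB \xrightarrow{g} \cC$ with splittings $s \colon \cB \to \cA$ (left adjoint and retraction of $f$ via a unit iso $\mathrm{id}_\cA \xrightarrow{\sim} s f$) and $t \colon \cC \to \cB$ (right adjoint and section of $g$ via a counit iso $g t \xrightarrow{\sim} \mathrm{id}_\cC$) is called \emph{standard} when, roughly speaking, the two functors $f s, t g \colon \cB \to \cB$ assemble into a natural cofiber sequence $f s (B) \rightarrowtail B \twoheadrightarrow t g (B)$ for every object $B$ — equivalently, every $B \in \cB$ sits in a natural short exact sequence with ``$\cA$-part'' $f s (B)$ and ``$\cC$-part'' $t g (B)$. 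I would first state this characterization precisely as it appears in the appendix (or in \cite{BGT}), so the body of the proof is just a verification.

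First I would identify the relevant functors explicitly in our situation: $f = j_\ast$ is extension by zero, $s = j^\ast$ is restriction to $X \setminus x_0$, $g = i^\ast$ is the costalk at $x_0$, and $t = i_\ast$ is the skyscraper cosheaf. Given a functor $\cF \colon \mathsf{Ent}_\Delta(X) \to \cM$ — i.e., an assignment of an $R$-module to each stratum together with generization maps — the composite $j_\ast j^\ast \cF$ is the subfunctor agreeing with $\cF$ on all strata of $X \setminus x_0$ and sending $x_0$ to $0$; the composite $i_\ast i^\ast \cF$ is the quotient functor that is $\cF(x_0)$ on the stratum $x_0$ (with the identity generization maps) and $0$ on every other stratum. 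Then I would exhibit the evident natural transformations $j_\ast j^\ast \cF \hookrightarrow \cF \twoheadrightarrow i_\ast i^\ast \cF$: the first is the identity on strata $\ne x_0$ and the zero map at $x_0$; the second is zero on strata $\ne x_0$ and the identity at $x_0$. Because $x_0$ is an open stratum (so no stratum generizes \emph{to} $x_0$; the only morphisms in $\mathsf{Ent}_\Delta(X)$ into $x_0$ are identities) these really are maps of functors, and stratum-by-stratum the sequence $0 \to j_\ast j^\ast \cF(\sigma) \to \cF(\sigma) \to i_\ast i^\ast \cF(\sigma) \to 0$ is obviously short exact in $\cM$ (it is either $0 \to \cF(\sigma) \to \cF(\sigma) \to 0 \to 0$ for $\sigma \ne x_0$ or $0 \to 0 \to \cF(x_0) \to \cF(x_0) \to 0$ for $\sigma = x_0$). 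Since cofibrations, admissible epimorphisms, and short exact sequences in the Waldhausen structure on $\mathsf{Fun}(\mathsf{Ent}_\Delta(X),\cM)$ are detected stratum-wise (Lemma~\ref{lem:toWald} and the functor-category Waldhausen structure), this objectwise short exact sequence is a short exact sequence of functors, and naturality in $\cF$ is immediate from the construction. Matching these transformations with the units/counits of the adjunctions from the proof of Lemma~\ref{lem:key} — the unit $\mathrm{id} \to j^\ast j_\ast$ being an iso and the counit $i^\ast i_\ast \to \mathrm{id}$ being an iso, exactly as recorded there — shows the sequence is standard in the sense of the appendix.

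I do not anticipate a serious obstacle: the argument is essentially bookkeeping, and the only place that requires care is making sure the two natural transformations are genuinely natural with respect to \emph{all} morphisms of $\mathsf{Ent}_\Delta(X)$, which is where openness of the stratum $x_0$ is used (it guarantees there are no generization maps pointing into $x_0$ that could obstruct naturality of the inclusion $j_\ast j^\ast \cF \hookrightarrow \cF$ at $x_0$). The mild subtlety worth flagging explicitly is that this is special to $X$ a one-manifold with $x_0$ a vertex — more precisely, to $x_0$ being an open stratum — mirroring the remark following Lemma~\ref{lem:key}; I would include a one-line note to that effect. Everything else (exactness stratum-wise, compatibility with cofibrations and weak equivalences) is inherited from $\cM$ and the pointwise Waldhausen structure, so the proof can be kept to a short paragraph citing Lemma~\ref{lem:key} and the definition of \emph{standard} from Appendix~\ref{app:A}.
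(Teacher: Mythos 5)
Your plan substitutes a different notion of ``standard'' for the one the paper actually uses, and that substitution hides most of the work. Definition \ref{defn:standard} imposes three conditions on the split sequence of Lemma \ref{lem:key}: (1) every component of the counit $j_\ast j^\ast \cF \to \cF$ is a cofibration; (2) for every cofibration $\cF \to \cF'$ in $\mathsf{Fun}(\mathsf{Ent}_\Delta(X),\cM)$, the induced map $\cF \amalg_{j_\ast j^\ast \cF} j_\ast j^\ast \cF' \to \cF'$ out of the pushout is again a cofibration; and (3) a condition on the Waldhausen category $\mathsf{Fun}(\mathsf{Ent}_\Delta(X\setminus x_0),\cM)$ itself (cofibrations with trivial quotient are isomorphisms). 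What you verify --- that stratum by stratum $j_\ast j^\ast \cF \to \cF$ is a monomorphism with cokernel identified with a skyscraper --- amounts only to condition (1), together with an identification of the cofiber that the definition does not ask for. The characterization you announce ``as it appears in the appendix,'' namely a natural cofiber sequence $j_\ast j^\ast \cF \to \cF \to i_\ast i^\ast \cF$, is not the definition in Appendix \ref{app:A}, and conditions (2) and (3) do not follow from it formally. The paper's proof spends most of its effort on precisely these: condition (2) is checked componentwise (for strata $S \neq x_0$ the map out of the pushout is the usual quotient of $\cF(S)\oplus\cF'(S)$ and is a monomorphism; at $x_0$ the pushout is $\cF(x_0)$ and the map is $\varphi_{x_0}$, a monomorphism by hypothesis), and condition (3) is imported from Remark 2.18 of \cite{fiore} for module categories and inherited by the functor category. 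Without these two verifications your argument proves something strictly weaker than the lemma.

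A secondary issue is the directional claim you lean on for naturality: you assert that $x_0$ is an open stratum and that the only morphisms of $\mathsf{Ent}_\Delta(X)$ into $x_0$ are identities. With the paper's conventions this is backwards: $x_0$ is a closed zero-stratum, the non-identity morphisms of $\mathsf{Ent}_\Delta(X)$ point from the adjacent one-strata into $x_0$ (entrance paths enter deeper strata), and it is the complement $X\setminus x_0$ that is open --- which is what the remark following Lemma \ref{lem:key} actually uses. Since your justification that the two transformations are maps of functors, and your description of $i_\ast i^\ast\cF$ as a skyscraper, both rest on this claim, they need to be redone with the correct direction of entrance paths and with the adjunction conventions fixed in Lemma \ref{lem:key}, rather than asserted from openness of $x_0$.
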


\begin{proof}
Condition (3) of Definition \ref{defn:standard} holds for categories of modules (see Remark 2.18 of \cite{fiore}) and by the same reasoning, our category of functors valued in $\cM$.

Let $\cF \in \mathsf{Fun} (\mathsf{Ent}_\Delta (X),\cM)$. Each component of the natural transformation $\left (j_\ast \circ j^\ast \right )(\cF) \to \cF$ is an isomorphism, except for the component corresponding to $x_0$. The component corresponding to $x_0$ is the inclusion of zero, which is a cofibration. Therefore, $\left(j_\ast \circ j^\ast \right )(\cF) \to \cF$ is a cofibration in the functor category.

Finally, let $\varphi \colon \cF \to \cF'$ be a cofibration in $\mathsf{Fun} (\mathsf{Ent}_\Delta (X),\cM)$. We need to check that unique map 
\[
\psi \colon \cF \coprod_{j_\ast j^\ast \cF} j_\ast j^\ast \cF' \to \cF'
\]
is a cofibration. By definition, we must check this condition componentwise. For a component corresponding to $x_0 \neq S \subset X$, 
the kernel of $\psi$ is exactly the submodule of $\cF (S) \oplus \cF' (S)$ by which we quotient when constructing pushouts in categories of modules; that is, the $S$ component of $\psi$ is a monomorphism.  For the $x_0$ component, the pushout is identified with $\cF(x_0)$ and $\psi_{x_0} = \varphi_{x_0}$, so by hypothesis it is a monomorphism.
\end{proof}

We require one final observation/lemma before assembling the proof of Theorem \ref{thm:bigOplus}. From the definition of entrance paths and the fact that K-theory preserves colimits, it immediately follows that K-theory is addivitive with respect to connected components of our parameter space. That is:

\begin{lemma}\label{lem:additive}
Let $X = X_a \amalg X_b$ be a  stratified space, then there is an equivalence of spectra
\[
\KK(\mathsf{pMod^{\cM}} (X)) \cong \KK(\mathsf{pMod^{\cM}} (X_a)) \vee \KK(\mathsf{pMod^{\cM}} (X_b)).
\]
\end{lemma}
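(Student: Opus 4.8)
The plan is to exploit the fact that both the parametrizing space and the target structure decompose along the disjoint union $X = X_a \amalg X_b$, and that $\KK$ of a Waldhausen category sends products of categories to wedges (equivalently, coproducts) of spectra. First I would observe that since $X_a$ and $X_b$ are each open and closed in $X$, the combinatorial entrance path category splits as a coproduct of categories $\mathsf{Ent}_\Delta(X,\cP) \cong \mathsf{Ent}_\Delta(X_a,\cP_a) \amalg \mathsf{Ent}_\Delta(X_b,\cP_b)$: an entrance path cannot travel between the two components, so every object lies in exactly one piece and there are no morphisms between the pieces. This is immediate from the definition of $\mathsf{Ent}_\Delta$ as having objects the strata and morphisms the face relations, together with the fact that a stratum of $X$ is a stratum of one of the two summands.

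Next I would use the general categorical fact that a functor category out of a coproduct of small categories is (equivalent to) the product of the functor categories: $\mathsf{Fun}(\cD_a \amalg \cD_b, \cM) \cong \mathsf{Fun}(\cD_a, \cM) \times \mathsf{Fun}(\cD_b, \cM)$. This equivalence is compatible with the Waldhausen structures (cofibrations and weak equivalences are detected componentwise, hence factorwise), so it is an equivalence of Waldhausen categories. Applying this with $\cD_a = \mathsf{Ent}_\Delta(X_a,\cP_a)$ and $\cD_b = \mathsf{Ent}_\Delta(X_b,\cP_b)$ identifies $\mathsf{pMod}^{\Vect}(X)$ with the product $\mathsf{pMod}^{\Vect}(X_a) \times \mathsf{pMod}^{\Vect}(X_b)$ as Waldhausen categories.

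Finally I would invoke the additivity of algebraic K-theory with respect to products of Waldhausen categories: $\KK(\cC \times \cD) \simeq \KK(\cC) \vee \KK(\cD)$. This is standard (it follows, for instance, from the additivity theorem, or from the fact that $\KK$ preserves finite products and that finite products of spectra coincide with finite coproducts/wedges) and is exactly the ``K-theory preserves colimits'' / additivity input flagged in the statement; it is also part of the machinery recalled in Appendix \ref{app:A} and in \cite{BGT}. Chaining the three identifications gives the desired equivalence of spectra. The only mildly delicate point — the ``main obstacle,'' though it is more a matter of care than of difficulty — is checking that the equivalence $\mathsf{Fun}(\cD_a \amalg \cD_b, \cM) \simeq \mathsf{Fun}(\cD_a, \cM) \times \mathsf{Fun}(\cD_b, \cM)$ is genuinely an equivalence of \emph{Waldhausen} categories, i.e. that it and its inverse are exact functors; but since cofibrations and weak equivalences in all of these functor categories are defined objectwise (hence, for a coproduct domain, componentwise on $\cD_a$ and $\cD_b$), this is a routine verification. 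An iterated application of the statement extends it to any finite disjoint union, and, since K-theory commutes with filtered colimits, to arbitrary disjoint unions.
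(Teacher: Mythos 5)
Your proposal is correct and takes essentially the same route the paper intends: the paper gives no written proof of this lemma, only the remark that it follows from the definition of entrance paths and the behavior of K-theory, and your argument---splitting $\mathsf{Ent}_\Delta(X)$ as a coproduct of the entrance path categories of the two components, identifying the functor category with the product of Waldhausen categories (with cofibrations and weak equivalences detected componentwise), and using that K-theory carries finite products of Waldhausen categories to wedges of spectra---is precisely the fleshed-out version of that remark. If anything, your formulation via products is more accurate than the paper's passing phrase ``K-theory preserves colimits,'' since the functor category out of a coproduct is a \emph{product}, and it is the stability of spectra that lets the resulting product be rewritten as a wedge.
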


Although the strata of a one-dimensional stratified space are not generally disjoint, we still have an additivity result similar to the previous lemma, as we will now show.

\begin{lemma}\label{lem:stratadd}
Let $X$ be a one-dimensional combinatorial manifold. There is an  equivalence of spectra
\[
\KK(\mathsf{pMod^{\cM}} (X)) \cong \bigvee_{x_0 \in X_0} \KK(\mathsf{pMod^{\cM}} (x_0)) \vee \bigvee_{x_1 \in X_1} \KK(\mathsf{pMod^{\cM}} (x_1)).
\]
where $X_i$ is the set of $i$-strata of $X$.
\end{lemma}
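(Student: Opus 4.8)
The plan is to induct on the number of zero-strata of $X$, peeling them off one at a time using the split short exact sequence of Lemma \ref{lem:key} together with its standardness (Lemma \ref{lem:standard}) and the additivity theorem for standard split short exact sequences recalled in Appendix \ref{app:A}. Recall from that appendix (see also \cite{BGT}) that a standard split short exact sequence of Waldhausen categories $\cA \to \cB \to \cC$ induces a splitting of K-theory spectra $\KK(\cB) \simeq \KK(\cA) \vee \KK(\cC)$. Applying this to the sequence of Lemma \ref{lem:key} with $X$ in the middle, $X \setminus x_0$ on the left, and the point-stratum $x_0$ on the right, we obtain
\[
\KK(\mathsf{pMod^{\Vect}}(X)) \simeq \KK(\mathsf{pMod^{\Vect}}(X \setminus x_0)) \vee \KK(\mathsf{pMod^{\Vect}}(x_0)).
\]

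Next I would iterate. Since $X$ is a combinatorial one-manifold, it has finitely many zero-strata $x_0^{(1)}, \dots, x_0^{(N)}$ (finiteness is part of our standing conventions; if one wishes to allow $X = \RR$ stratified by $\NN$ one first reduces to connected components via Lemma \ref{lem:additive} and treats the locally-finite case by a colimit argument, using that K-theory commutes with filtered colimits of categories). Removing the zero-strata one after another, at each stage the ambient space remains a one-dimensional stratified space in which the removed point is an open stratum, so Lemmas \ref{lem:key} and \ref{lem:standard} apply verbatim at each step. After removing all $N$ zero-strata we are left with $X \setminus X_0$, which is a disjoint union of open one-strata; applying Lemma \ref{lem:additive} (finitely many times, or once in the form stated) gives $\KK(\mathsf{pMod^{\Vect}}(X \setminus X_0)) \simeq \bigvee_{x_1 \in X_1} \KK(\mathsf{pMod^{\Vect}}(x_1))$. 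Reassembling the wedge summands collected along the way yields exactly the claimed equivalence.

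The only genuine subtlety — and the step I expect to be the main obstacle to write carefully — is verifying that the hypotheses of Lemma \ref{lem:key} really do persist through the induction, i.e., that after deleting several zero-strata the remaining point we wish to delete is still a stratum whose complement is open and such that $j^\ast$ retains its left adjoint $j_\ast$ given by extension by zero. This is where the combinatorial nature of the entrance path category does the work: $\mathsf{Ent}_\Delta$ of a one-dimensional combinatorial manifold is a finite (hence discrete) poset, so extension-by-zero along the inclusion of a sub-poset is always exact and left adjoint to restriction, and the counit/unit isomorphism arguments of Lemma \ref{lem:key} go through unchanged. A secondary bookkeeping point is that the splitting produced by the additivity theorem is natural enough that the order in which we remove the zero-strata does not matter; this follows because each summand $\KK(\mathsf{pMod^{\Vect}}(x_0^{(k)}))$ is pulled off via the functor $i^\ast$ for the corresponding point and these functors commute with one another up to the relevant identifications. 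With these observations in place the induction is routine and the result follows.
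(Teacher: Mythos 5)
Your proposal is correct and follows essentially the same route as the paper: an induction that peels off zero-strata one at a time via the split short exact sequence of Lemma \ref{lem:key} (with standardness from Lemma \ref{lem:standard}), applies Waldhausen additivity (Theorem \ref{thm:additivity}) at each step, and uses Lemma \ref{lem:additive} to collect the disjoint pieces. The only cosmetic difference is that the paper invokes the inductive hypothesis on $X \setminus x_0^*$ directly rather than stripping all zero-strata before treating the one-strata, which changes nothing of substance.
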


\begin{proof}
We proceed by induction over the number of zero-strata. As our base case, note that when there are no zero-strata, we have $X_0 = \emptyset$ and $x_1 = X_1 = X$, so the claim holds.
Now suppose that the claim holds whenever $X$ contains $n-1$ zero-strata, for all $n-1 \geq 0$. Then consider the case that $X$ contains $n$ zero-strata. For an arbitrary zero-stratum $x_0^* \in X_0$, we know by Lemma \ref{lem:key} that 
\[
\xymatrix{\mathsf{Fun} (\mathsf{Ent}_\Delta (X \setminus x_0^*),\cM)\ar[r]_{} 
&
\mathsf{Fun} (\mathsf{Ent}_\Delta (X),\cM) \ar[r]_{} \ar@/_1.0pc/[l]_{} &
 \mathsf{Fun} (\mathsf{Ent}_\Delta (x_0^*),\cM) 
\ar@/_1.0pc/[l]_{}
}
\]
is a split short exact sequence of Waldhausen categories. Then by Theorem \ref{thm:additivity}, we see that there is an equivalence of spectra
\[
\KK(\mathsf{pMod^{\cM}} (X)) \cong \KK(\mathsf{pMod^{\cM}} (X \setminus x_0^*)) \vee \KK(\mathsf{pMod^{\cM}} (x_0^*))
\]
Since $X \setminus x_0^*$ is itself a one-dimensional combinatorial manifold with $n-1$ zero-strata, our inductive hypothesis allows us to write
\begin{align*}
\KK(\mathsf{pMod^{\cM}} (X)) \cong  \bigg(\bigvee_{x_0 \neq x_0^* \in X_0}\KK(\mathsf{pMod^{\cM}} (x_0^*)) \vee \KK(\mathsf{pMod^{\cM}} &(X_1))\bigg) 
\\ 
&\vee \KK(\mathsf{pMod^{\cM}} (x_0^*)).
\end{align*}
Since the zero-strata are disjoint, by Lemma \ref{lem:additive}, we may reindex by absorbing the last term into the first and we have the desired result.
\end{proof}

Utilizing the preceeding lemma, we now prove the following theorem which computes the $K$-theory of zig-zag modules parametrized by a given 1-manifold.

\begin{theorem}\label{thm:bigOplus}
Let $X$ be a one-dimensional combinatorial manifold. There is an  equivalence of spectra
\[
\KK(\mathsf{pMod^{\Vect_\FF}} (X)) \cong \bigvee_{X_0} \KK (\FF) \vee \bigvee_{X_1} \KK (\FF),
\]
where $X_i$ is the set of $i$-strata of $X$ and where $\KK (\FF)$ denotes the K-theory spectrum of the field $\FF$.
\end{theorem}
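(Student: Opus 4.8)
The plan is to reduce everything to Lemma \ref{lem:stratadd} and then compute the K-theory of a persistence module parametrized by a single stratum. By Lemma \ref{lem:stratadd}, we already have the wedge decomposition
\[
\KK(\mathsf{pMod^{\Vect_\FF}} (X)) \cong \bigvee_{x_0 \in X_0} \KK(\mathsf{pMod^{\Vect_\FF}} (x_0)) \vee \bigvee_{x_1 \in X_1} \KK(\mathsf{pMod^{\Vect_\FF}} (x_1)),
\]
so the theorem follows once we show that for \emph{any} single stratum $s$ of $X$ (whether zero- or one-dimensional), $\KK(\mathsf{pMod^{\Vect_\FF}} (s)) \simeq \KK(\FF)$. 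The key observation is that a single stratum $s$, viewed as a stratified space in its own right, has combinatorial entrance path category $\mathsf{Ent}_\Delta (s)$ equal to the terminal category (one object, only the identity morphism): a zero-stratum is a point, and a one-stratum is an open interval with no proper faces, so in both cases there are no non-identity face relations. Hence $\mathsf{Fun}(\mathsf{Ent}_\Delta (s), \Vect_\FF) \simeq \Vect_\FF$ as Waldhausen categories.

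The first step, then, is to record that $\mathsf{Ent}_\Delta(x_0) \cong \ast$ and $\mathsf{Ent}_\Delta(x_1) \cong \ast$ for $x_0$ a zero-stratum and $x_1$ a one-stratum of a combinatorial one-manifold; this is immediate from the definition of the combinatorial entrance path category together with the fact that the strata of a combinatorial manifold equipped with its native stratification are (relatively) open cells. The second step is to note that $\mathsf{Fun}(\ast, \Vect_\FF)$ is canonically isomorphic to $\Vect_\FF$, and that this isomorphism respects the Waldhausen structure (cofibrations are injections, weak equivalences are isomorphisms), so it induces an equivalence of K-theory spectra $\KK(\mathsf{pMod^{\Vect_\FF}}(s)) \simeq \KK(\Vect_\FF) = \KK(\FF)$, where the last equality is the definition of the algebraic K-theory of the field $\FF$. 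The third step is simply to substitute this identification into the decomposition from Lemma \ref{lem:stratadd}, giving
\[
\KK(\mathsf{pMod^{\Vect_\FF}} (X)) \cong \bigvee_{X_0} \KK(\FF) \vee \bigvee_{X_1} \KK(\FF),
\]
as claimed.

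I do not expect a serious obstacle here, since the substantive work is already packaged into Lemma \ref{lem:stratadd} (and behind it, Lemmas \ref{lem:key} and \ref{lem:standard} and the additivity theorem \ref{thm:additivity} of the appendix). The only point requiring a little care is making sure that when we restrict to a single stratum $s \subseteq X$, the relevant entrance path category really is the terminal category and not something larger — i.e., that we are taking $\mathsf{Ent}_\Delta$ of $s$ with its own (trivial) stratification, consistent with how $\KK(\mathsf{pMod^{\Vect}}(x_i))$ was used in the statement and proof of Lemma \ref{lem:stratadd}. This is a matter of unwinding definitions rather than a genuine difficulty. One could alternatively phrase the whole argument by observing that the induction in Lemma \ref{lem:stratadd} terminates in the case $X_0 = \emptyset$, where $X$ is a disjoint union of open intervals and circles; by Lemma \ref{lem:additive} this splits as a wedge over the connected one-strata, each contributing $\KK(\mathsf{Fun}(\ast,\Vect_\FF)) \simeq \KK(\FF)$, and then the inductive step of Lemma \ref{lem:stratadd} attaches one copy of $\KK(\FF)$ per zero-stratum. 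Either way the content is the same, and the proof is short.
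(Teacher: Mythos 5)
Your proposal is correct and follows essentially the same route as the paper's own proof: decompose via Lemma \ref{lem:stratadd}, identify $\mathsf{Ent}_\Delta$ of each stratum with the terminal category so that $\mathsf{Fun}(\ast,\Vect_\FF)\cong\Vect_\FF$, and conclude $\KK(\Vect_\FF)=\KK(\FF)$. No gaps; the only difference is your optional remark about running the induction down to the case $X_0=\emptyset$, which the paper does not spell out but which changes nothing substantive.
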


\begin{proof}
First, we identify the $K$-theory of components of the stratification, i.e., we identify $\KK(\mathsf{pMod^{\Vect}} (x_0))$ and $\KK(\mathsf{pMod^{\Vect}} (x_1))$ for $x_0 \in X_0$ and $x_1 \in X_1$, respectively. We begin with the former.

By Definition \ref{def:kofzz}, we have $\KK(\mathsf{pMod^{\Vect_\FF}} (x_0)) = \KK(\mathsf{Fun} (\mathsf{Ent}_\Delta (x_0),\Vect_\FF))$. Since $\mathsf{Ent}_\Delta(x_0)$ is the terminal category (a single object and an identity morphism), $\mathsf{Fun} (\mathsf{Ent}_\Delta (x_0),\Vect_\FF)$ is isomorphic to the category of $\Vect_\FF$ itself. Thus, $\KK(\mathsf{pMod^{\Vect}_\FF} (x_0)) = \KK(\Vect)$. Now, the category of finite dimensional vector spaces over $\FF$ is exactly the category of finitely generated projective modules over $\FF$ (considered as a ring). Hence, $\KK(\Vect_\FF)$ is just the algebraic K-theory of $\FF$.

We observe that $\mathsf{Ent}_\Delta(x_1)$ is also a single object category, so the proof that $\KK(\mathsf{pMod^{\Vect}_\FF} (x_1)) \cong \KK(\FF)$ is  identical.
Thus, we have shown the $K$-theory of each strata is a copy of $K(\FF)$. We know by Lemma \ref{lem:stratadd} that $\KK(\mathsf{pMod^{\Vect_\FF}} (X))$ is additive over strata, so the result follows.
\end{proof}

\begin{remark}
An alternative approach to proving the preceding theorem  could be to use Serre subcategories and Abelian Localization. This approach has a number of its own subtleties so we have presented the proof above.
\end{remark}

\begin{corollary}
For $X$ a one-dimensional combinatorial manifold, we have
\[
K_0(\mathsf{pMod^{\Vect_\FF}} (X)) \cong \bigoplus_{X_0} \ZZ \oplus \bigoplus_{X_1} \ZZ,
\]
and
\[
K_1(\mathsf{pMod^{\Vect_\FF}} (X)) \cong \bigoplus_{X_0} \FF^\times \oplus \bigoplus_{X_1} \FF^\times,
\]
where $X_i$ is the set of $i$-strata of $X$ and $\FF^\times$ is the group of units of $\FF$.
\end{corollary}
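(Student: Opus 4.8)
The plan is to read the corollary off from Theorem~\ref{thm:bigOplus} by passing to homotopy groups, so there is essentially nothing new to prove. That theorem gives an equivalence of spectra
\[
\KK(\mathsf{pMod^{\Vect_\FF}}(X)) \cong \bigvee_{X_0} \KK(\FF) \vee \bigvee_{X_1} \KK(\FF),
\]
and I would simply apply $\pi_0$ and $\pi_1$ to both sides. The one point worth being careful about is that $\pi_n \cong [\Sigma^n\SS,-]$ commutes with \emph{arbitrary} wedges (coproducts) of spectra, since $\Sigma^n\SS$ is a compact object of the stable homotopy category; this matters because a one-dimensional combinatorial manifold may have infinitely many strata (for instance $\RR$ with the stratification $(\RR,\ZZ)^{\wwedge}$), so the wedges above are in general infinite and the homotopy groups split as \emph{direct sums}, not products. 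This yields, for $n = 0,1$,
\[
K_n(\mathsf{pMod^{\Vect_\FF}}(X)) \cong \bigoplus_{X_0} K_n(\FF) \oplus \bigoplus_{X_1} K_n(\FF).
\]

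Next I would substitute the classical values of $K_0$ and $K_1$ of a field. For $K_0$: as recalled in the introduction, $\Vect_\FF$ is the category of finitely generated projective modules over $\FF$, and the dimension function induces an isomorphism $K_0(\FF) \cong \ZZ$. For $K_1$: I would invoke the standard computation that the determinant induces an isomorphism $K_1(R) = GL(R)/E(R) \xrightarrow{\ \sim\ } R^\times$ for any field $R$ (see, e.g., \cite{Weibel} or \cite{Rosenberg}), so $K_1(\FF) \cong \FF^\times$. Plugging $K_0(\FF) \cong \ZZ$ and $K_1(\FF) \cong \FF^\times$ into the display above gives the two asserted isomorphisms.

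I expect no genuine obstacle here, since all the content is already contained in Theorem~\ref{thm:bigOplus}; the only subtlety is the compatibility of $\pi_0$ and $\pi_1$ with possibly infinite wedges of spectra, which I have flagged. If one preferred to avoid the spectrum-level statement, the $K_0$ isomorphism could alternatively be obtained directly from Lemma~\ref{lem:stratadd} together with $\pi_0 \KK(\Vect_\FF) \cong \ZZ$; but with Theorem~\ref{thm:bigOplus} already in hand, taking homotopy groups is the most economical route.
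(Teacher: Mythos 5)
Your proposal is correct and matches the paper's (implicit) argument: the corollary is simply Theorem~\ref{thm:bigOplus} after applying $\pi_0$ and $\pi_1$, together with the standard identifications $K_0(\FF)\cong\ZZ$ and $K_1(\FF)\cong\FF^\times$. Your extra remark that homotopy groups commute with possibly infinite wedges (by compactness of the sphere) is a sensible precaution the paper leaves unstated, but it is not a different route.
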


The higher K-theory of fields contains interesting torsion and other phenomena. We refer the reader to Chapter IV of \cite{Weibel} for an in-depth description.

%%%%%%%%%%%%%%%%%%%%%%%
\subsection{Pointed Set Valued coSheaves}

While persistence modules are most often assumed to take values in vector spaces, there are interesting modules/cosheaves that take values in other categories. Of particular interest to us is the {\em Leray--Reeb} cosheaf, $\cL_f$, associated to a map $f \colon Y \to X$, see \cite{CP}. Let us consider a simple situation: let $f \colon Y \to \RR$ be a Morse function on a closed manifold $Y$.  Now, given $U \subseteq \RR$, let $\cL_f(U) := \pi_0 f^{-1} (U)$. It is standard that the critical values of $f$ stratify $\RR$ and that $\cL_f$ is constructible with respect to this stratification. So the Leray--Reeb cosheaf defines a persistence module taking values in the category of finite sets $\mathsf{Set}$.

For technical convenience we prefer our sets to be pointed/based. Let us  consider $\mathsf{Set}_\ast$, the category of finite pointed sets and base point preserving functions. The category $\mathsf{Set}_\ast$ is Waldhausen (cofibrations are injections and weak equivalences are bijections), so given a combinatorial manifold $(X \xrightarrow{\phi} \cP)$ we can compute the $K$-theory of the associated (Waldhausen) category of persistence modules $\mathsf{pMod}^{\mathsf{Set}_\ast} (X)$.

Note that the proof Lemma \ref{lem:key} goes through for $\mathsf{Set}_\ast$ valued functors {\em mutatis mutandis}. Similarly, Lemma \ref{lem:additive} is easily adapted to the case at hand. The following version of Lemma \ref{lem:standard} requires only slightly more care.

\begin{lemma}\label{lem:setadd}
Let $X$ be a combinatorial manifold and $x_0 \in X$ a connected zero-dimensional stratum, i.e., a point that is a stratum. The following   split short exact sequence of Waldhausen categories is standard
\[
\xymatrix{\mathsf{Fun} (\mathsf{Ent}_\Delta (X \setminus x_0),\mathsf{Set}_\ast)\ar[r]_{\quad j_\ast} 
&
\mathsf{Fun} (\mathsf{Ent}_\Delta (X),\mathsf{Set}_\ast) \ar[r]_{i^\ast} \ar@/_1.0pc/[l]_{\quad j^\ast} &
 \mathsf{Fun} (\mathsf{Ent}_\Delta (x_0),\mathsf{Set}_\ast) 
\ar@/_1.0pc/[l]_{i_\ast},
}
\]
where $i \colon x_0 \hookrightarrow X$ and $j \colon X \setminus x_0 \hookrightarrow X$ are the inclusion maps.
\end{lemma}

\begin{proof}
Condition (3) of Definition \ref{defn:standard} is inherited from $\mathsf{Set}_\ast$ where a cofibration is an injection and a cofiber sequence of finite pointed sets $S \hookrightarrow T \to \ast$ requires a bijection $S \cong T$.

Let $\cF \in \mathsf{Fun} (\mathsf{Ent}_\Delta (X),\mathsf{Set}_\ast)$. As before, each component of the natural transformation $\left (j_\ast \circ j^\ast \right )(\cF) \to \cF$ is an isomorphism, except for the component corresponding to $x_0$. The component corresponding to $x_0$ is the inclusion of zero (the singleton set $\ast$), which is a cofibration. Therefore, $\left (j_\ast \circ j^\ast \right )(\cF) \to \cF$ is a cofibration in the functor category.

Finally, let $\varphi \colon \cF \to \cF'$ be a cofibration in $\mathsf{Fun} (\mathsf{Ent}_\Delta (X),\mathsf{Set}_\ast)$. We need to check that unique map 
\[
\psi \colon \cF \coprod_{j_\ast j^\ast \cF} j_\ast j^\ast \cF' \to \cF'
\]
is a cofibration. The same (componentwise) argument works as before. That is, for the stratum $x_0$  the pushout is identified with $\cF(x_0)$ and $\psi_{x_0}= \varphi_{x_0}$. Let $x_0 \neq S \in \mathsf{Ent}_\Delta(X)$, we are left to consider the commutative diagram below, where the square is a pushout,
\[
\xymatrix{ \cF(S) \ar[r]^{\varphi_S} \ar[d]_{\mathrm{Id}} & \cF' (S) \ar[d] \ar@/^1.0pc/[ddr]^{\mathrm{Id}}&\\
\cF(S) \ar[r] \ar@/_1.0pc/[drr]^{\varphi_S}& \cF(S) \coprod_\varphi \cF'(S) \ar[dr]^{\psi_S}&\\
&&\cF'(S).}
\]
Hence, as $\varphi_S$ is injective, so is $\psi_S$.

\end{proof}

Arguing as in the preceding subsection, we deduce the following.

\begin{lemma}\label{lem:strataddset}
Let $X$ be a one-dimensional combinatorial manifold. There is an  equivalence of spectra
\[
\KK(\mathsf{pMod^{\mathsf{Set}_\ast}} (X)) \cong \bigvee_{x_0 \in X_0} \KK(\mathsf{pMod^{\mathsf{Set}_\ast}} (x_0)) \vee \bigvee_{x_1 \in X_1} \KK(\mathsf{pMod^{\mathsf{Set}_\ast}} (x_1)).
\]
where $X_i$ is the set of $i$-strata of $X$.
\end{lemma}

The Barratt--Priddy--Quillen--Segal Theorem (see Chapter 4 of \cite{Weibel}) proves that there is an equivalence of spectra
\[
\SS \cong \KK(\mathsf{Set}_\ast) \cong \KK(\mathsf{pMod^{Set_\ast}} (x_0)),
\]
for $x_0 \in X$ a connected zero stratum in a combinatorial manifold $X$, and where $\SS$ is the sphere spectrum. Recall that the homotopy groups of $\SS$ are the stable homotopy groups of spheres. Consequently, by assembling our work to this point, we have proven the following.

\begin{theorem}\label{thm:set}
Let $X$ be a one-dimensional combinatorial manifold. There is an  equivalence of spectra
\[
\KK(\mathsf{pMod^{Set_\ast}} (X)) \cong \bigvee_{X_0} \SS \vee \bigvee_{X_1} \SS,
\]
where $X_i$ is the set of $i$-strata of $X$ and where $\SS$ denotes the sphere spectrum. In particular, 
\[
K_0(\mathsf{pMod^{Set_\ast}} (X)) \cong \bigoplus_{X_0} \ZZ \oplus \bigoplus_{X_1} \ZZ,
\]
and
\[
K_1(\mathsf{pMod^{Set_\ast}} (X)) \cong \bigoplus_{X_0} \ZZ/2 \oplus \bigoplus_{X_1} \ZZ/2.
\]
\end{theorem}

As it is the central object in homotopy theory, much is known about $\SS$, though mysteries remain. A remarkable theorem of Serre implies that $\pi_n (\SS)$ is finite for $n>0$ and these groups are known up to around $n=100$.

\begin{remark}
If one wants to avoid using pointed sets/basepoints, one can consider the plain old category of sets $\mathsf{Set}$ and functions. This category does not have a zero object as the initial object is the empty set, while a final object is a singleton set. Hence, $\mathsf{Set}$ does not define a Waldhausen category in a straightforward manner. If one considers the subcategory $\mathsf{Set}_i$ consisting of the same objects, but where a morphism must be injective, one can define $\KK (\mathsf{pMod}^{\mathsf{Set}_i} (X))$. Indeed, $\mathsf{Set}_i$ and the resulting functor category can be equipped with the structure of an {\em assembler} and Zakharevich defines $K$-theory for assemblers in \cite{Inna1} and \cite{Inna2}. It is again a consequence of the Barratt--Priddy--Quillen--Segal Theorem that for each $n$ we have an isomorphism
\[
K_n (\mathsf{pMod}^{\mathsf{Set}_i} (X)) \cong K_n (\mathsf{pMod}^{\mathsf{Set}_\ast} (X)).
\]

\end{remark}

\section{Euler Curves and Virtual Diagrams}

In this section, we give two applications of our K-theoretic work.

\subsection{Euler Curves and $K_0$}

Let $\{V_i\}$ be a monotone persistence module of vector spaces, with indexing set $I$. We choose an embedding $I \hookrightarrow \NN$ and---in what follows---identify $I$ with its image in the natural numbers. The propagated persistence cosheaf, $F_V$, is constructible on $\RR$ stratified by $\NN$.

\begin{definition}
The (scaled) \emph{Euler curve} of $\{V_i\}$ is the constructible function $\chi_V \colon \RR \to \ZZ$ given by $\chi (x) = \mathop{rank}  (F_V)_x$, the rank of the costalk at $x \in \RR$. If $\{V_i\}$ is a module of graded vector spaces, then $\chi_V$ is the alternating sum of the ranks of the graded vector space that is the costalk.
\end{definition}

Note that any constructible $\ZZ$-valued function on $\RR$ naturally defines a class in $K_0$. As noted in the proof of Theorem \ref{thm:bigOplus}, the class in $K_0$ of a cosheaf only depends on its dimension, so we have the following.

\begin{proposition}
Let $V_\ast$ be a standard, finite persistence module of vector spaces and $\RR$ stratified ambiently by its subset $\NN$. Then,
\[
[F_V]=[\chi_V] \in K_0 (\mathsf{pMod}^{\Vect}(\RR)).
\]
\end{proposition}

While the statement of the proposition feels obvious, it does contain content. Indeed, one of the classical motivations for  simplicial homology is fixing the functoriality of the Euler characteristic. In general, a map of complexes $f \colon X_\bullet \to Y_\bullet$ does not induce a map between $\chi (X_\bullet)$ and $\chi (Y_\bullet)$; only if $f$ is covering map is there a multiplicative relationship between Euler characteristics. The {\it categorification} of the Euler characteristic to homology fixes this functoriality issue. Given any (co)homological setting there is an analogue of Euler class (in topology, this can be achieved by considering orientations for cohomology theories). The preceding proposition witnesses a K-theoretic Euler class.

One consequence of realizing Euler curves/classes K-theoretically is that there is an obvious extension to arbitrary (finite) persistence modules: zig-zag, higher dimensional, etc. (As before, we only see the scaled/standardized curve/class.) This construction is an explicit realization of the yoga that K-theory is the universal Euler characteristic. 

%For a general parameter space $(X, \cP)$, the K-theory may have torsion, yet the Euler class only lives in the free abelian submodule, hence we kill torsion by tensoring with the rationals. (Alternatively, one could imagine we were working with topological K-theory instead and using that the Chern character is a rational isomorphism.) 

\begin{definition}\label{defn:euler}
Let $X$ be a combinatorial manifold, $V$ a category,  and $\cF \in \mathsf{pMod}^{V}(X)$  a persistence module. The \emph{Euler class},~$\chi (\cF)$, of $\cF$ is the $K-class$
\[
\chi(\cF) := [\cF] \in K_0 (\mathsf{pMod}^{V}(X)).
\]
\end{definition}

\subsection{Virtual Diagrams}\label{sect:virtual}

In \cite{BubVirtual}, Bubenik and Elchesen describe the group completion of a monoid of persistence diagrams. The resulting equivalence classes are called {\it virtual persistence diagrams} and can be realized by extending the diagrams to include arbitrary points in the (extended) first quadrant, i.e., not just points above the diagonal. We will denote Bubenik and Elchesen's Abelian group of virtual persistence diagrams by $K_0 (\mathsf{Diag})$.  We now describe a homomorphism (and its image)
\[
\delta \colon K_0 (\mathsf{pMod}^{\Vect}_{fin}(\RR) \to K_0 (\mathsf{Diag}),
\]
where $\RR$ is stratified by its subset $\NN$, i.e., the parameter space is $(\RR, \NN)^{\wwedge}$.

To begin, let $\mathsf{pMod}^{\Vect}_{fin}(\RR)$ denote the category of $\Vect$-valued constructible cosheaves on our stratified $\RR$ that are eventually constant, i.e., there exists $N \in \NN$ such that beyond $N$ the cosheaf is constant. This category has a monoidal structure induced by $\oplus$ in $\Vect$, so the objects in the category form a (commutative, unital) monoid.

We require a small tweak to the category $\mathsf{Diag}$ from \cite{BubVirtual}. As we allow features to persist for all future time, our persistence diagrams are built from the extended real line $\RR \cup \{\infty\}$; this is a minor point and we suppress it from notation.

Now, as noted above, we have an identification of $\mathsf{Ent}_\Delta (\RR, \sf{Nat}(\NN)^{\wwedge})$ with the poset~$\cZ\cZ_\NN$. Hence, an object $\cF \in \mathsf{pMod}^{\Vect}_{fin}(\RR))$ is simply a representation of~$\cZ\cZ_\NN$ (which is eventually finite). Following \cite{carlsson2010zigzag}, we use indecomposables of the associated representation of $\cZ\cZ_\NN$ to associate a diagram to $\cF$. More explicitly, we have the following assignment of a multi-set of points to a cosheaf
\[
\check{\delta} \colon \mathsf{pMod}^{\Vect}_{fin}(\RR) \to \sf{Diag} \subset K_0 (\sf{Diag}), \quad \cF \mapsto \{(b_i, d_i)\},
\]
 where each $b_i$ and $d_i$ correspond to the left and right indices (respectively) of an indecomposable element of the associated representation of $\cZ\cZ_\NN$.

\begin{figure}[h!] 
\centering
\includegraphics[scale=.75]{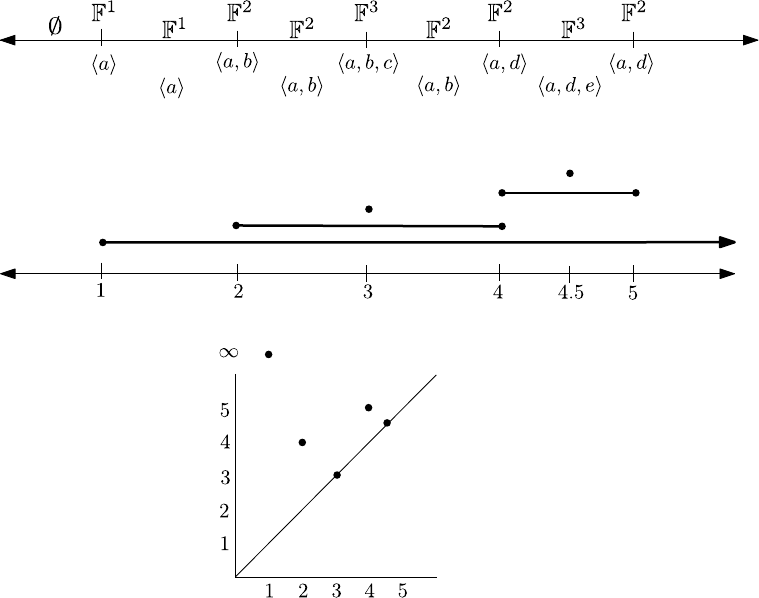}
\begin{caption}{The result of applying $\delta$ to the cosheaf shown on the top of the figure is the persistence diagram shown on the bottom. In the middle, we have drawn the associated barcode. In the spirit of \cite{carlsson2010zigzag}, we have shown all bars as closed intervals to emphasize that they do not necessarially arise from a monotone filtration. Note the presence of length-zero barcodes and on-diagonal points, corresponding to indecomposable elements with a single vector space. \label{fig:cosheaftodiag}}
\end{caption}
\end{figure}
Note that $\check{\delta}$ can easily be adapted to be a map into barcodes, where, instead of a point $(b, d)$, we draw a bar between $b$ and $d$.
This map $\check{\delta}$ (and the adaptation to signed barcodes) is nearly identical to the one described in Definition 2.6 of \cite{carlsson2010zigzag} with two notable differences. Firstly, the diagrams of ibid have points only on the integer lattice, whereas our diagrams have points on the $1/2$-integer lattice. This is a consequence of us additionally considering edges of the stratification rather than only vertices, and of our convention to then index vertices by non-integers. Furthermore, the diagrams of ibid contain on-diagonal points only when the maps to a particular vertex both have a nontrivial kernel. Our diagrams allow for these type of on-diagonal points, but additionally allow for on-diagonal points when the maps from an edge to its endpoints both have a nontrivial kernel. 

In general, these differences may be attributed to begining with persistence modules (the starting place for the map in \cite{carlsson2010zigzag}) or begining with persistence module cosheaves (the starting place for our map $\check{\delta}$). When one begins with persistence modules, the resulting cosheaf is a specific type -- importantly, each edge has an identity morphism to at least one of its endpoints (see Construction \ref{cons:pmcosheaf}). This allows ibid to only consider a poset on vertices, which may be otained from our poset of vertices and edges by collapsing these identity morphisms. The language of ibid is therefore more compatible with an explicit connection to filtrations, whereas our setting is generalized.

Returning to our primary goal, we note that our diagram map $\delta$ takes direct sums to sums of multisets.

\begin{lemma}
The map 
\[
\check{\delta} \colon \mathsf{pMod}^{\Vect}_{fin}(\RR) \to K_0 (\sf{Diag})
\]
is a monoid homomorphism.
\end{lemma}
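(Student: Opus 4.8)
The plan is to show directly from the definition of $\check{\delta}$ that it respects the monoid operations on both sides. Recall that the monoid structure on $\mathsf{pMod}^{\Vect}_{fin}((\RR, \NN)^{\wwedge})$ is induced objectwise by $\oplus$ in $\Vect$, and that under the identification of $\mathsf{Ent}_\Delta((\RR,\NN)^{\wwedge})$ with $\cZ\cZ_\NN$ from Lemma \ref{lem:entrance}, a cosheaf $\cF$ is nothing but a (finitely supported) representation of $\cZ\cZ_\NN$. The operation on $K_0(\mathsf{Diag})$ is the one induced by $+ \colon \NN_0 \times \NN_0 \to \NN_0$, i.e., multiset union counted with multiplicity.

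First I would recall the key algebraic input: since $\cZ\cZ_\NN$ is a zig-zag poset, the category of its (finitely supported) representations is Krull--Schmidt, and by the Gabriel-type classification cited above (a zig-zag module is indecomposable iff it is an interval module) every representation decomposes, uniquely up to reordering, as a finite direct sum of interval modules $\bigoplus_k \mathbb{I}_{[b_k,d_k]}$. The map $\check{\delta}$ sends $\cF$ to the multiset $\{(b_k,d_k)\}_k$ of (half-integer) endpoints of these interval summands. Next I would observe that for two such representations $\cF$ and $\cG$, with interval decompositions $\bigoplus_k \mathbb{I}_{[b_k,d_k]}$ and $\bigoplus_\ell \mathbb{I}_{[b_\ell',d_\ell']}$ respectively, the objectwise direct sum $\cF \oplus \cG$ has interval decomposition $\bigl(\bigoplus_k \mathbb{I}_{[b_k,d_k]}\bigr) \oplus \bigl(\bigoplus_\ell \mathbb{I}_{[b_\ell',d_\ell']}\bigr)$; by uniqueness of the Krull--Schmidt decomposition this \emph{is} the indecomposable decomposition of $\cF\oplus\cG$, so its multiset of endpoints is precisely the disjoint union (with multiplicity) of those of $\cF$ and $\cG$. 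Hence $\check{\delta}(\cF\oplus\cG) = \check{\delta}(\cF) + \check{\delta}(\cG)$ in $\mathsf{Diag}$, and therefore also in $K_0(\mathsf{Diag})$ since $\mathsf{Diag}\hookrightarrow K_0(\mathsf{Diag})$ is a monoid map. Finally I would check that the unit (the zero cosheaf) maps to the empty diagram, which is immediate since the zero representation has no indecomposable summands.

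The only real subtlety — and the step I expect to require the most care — is making sure the bookkeeping of endpoints is compatible with direct sums, i.e., that passing to the direct sum genuinely just concatenates the lists of interval summands without any interaction between them. This is exactly the uniqueness half of Krull--Schmidt for representations of zig-zag posets, so it is not hard, but it is the place where one must be precise rather than wave hands: one should note that $\cF\oplus\cG$ is still finitely supported (eventually constant) so that it lies in $\mathsf{pMod}^{\Vect}_{fin}$ and the classification applies, and that the endpoint-extraction $\mathbb{I}_{[b,d]}\mapsto(b,d)$ is well defined on isomorphism classes of indecomposables. With that in hand the statement follows formally.
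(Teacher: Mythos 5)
Your proof is correct and follows essentially the same route the paper takes: the paper simply asserts, just before the lemma, that the diagram map takes direct sums to sums of multisets, which is exactly your observation that the interval (indecomposable) decomposition of $\cF \oplus \cG$ is the concatenation of those of $\cF$ and $\cG$, justified by uniqueness of the decomposition into interval modules. Your added care about eventual constancy of the direct sum and the unit mapping to the empty diagram fills in details the paper leaves implicit.
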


By the universal property of $K_0$ we obtain our desired homomorphism.

\begin{corollary}
There is a homomorphism of Abelian groups
\[
\delta \colon K_0(\mathsf{pMod}^{\Vect}_{fin}(\RR))  \to K_0 (\sf{Diag})
\]
such that the following diagram commutes
\[
\xymatrix{
\mathsf{pMod}^{\Vect}_{fin}(\RR) \ar[r]^{\quad \check{\delta}} \ar[d] & K_0(\sf{Diag})\\
K_0(\mathsf{pMod}^{\Vect}_{fin}(\RR)) \ar[ur]_{\delta}
}.
\]
\end{corollary}

Because we have scaled/standardized our modules---as reflected by the parameter space $(\RR, \NN)^{\wwedge}$---the map $\delta$ has zero chance of being surjective (let alone an isomorphism). Following \cite{BubVirtual}, let $\mathsf{Diag}\left( \left (\frac{1}{2}\ZZ \right )^2, \left (\frac{1}{2}\ZZ_{\ge 0} \right)^2 \right )$ be monoid of (classical) persistence diagrams with half-integer (or infinite) coefficients. This monoid is a submonoid of the monoid of all (classical) persistence diagrams, $\mathsf{Diag}$, and we let $\mathsf{G} < K_0(\mathsf{Diag})$ denote the subgroup it generates. The following is clear from construction.

\begin{proposition}
The subgroup $\mathsf{G} < K_0 (\mathsf{Diag})$ is isomorphic to the image of the homomorphism
\[
\delta \colon  K_0(\mathsf{pMod}^{\Vect}_{fin}(\RR)) \to K_0 (\mathsf{Diag}).
\]
\end{proposition}

\appendix

\section{Waldhausen's K-theory}\label{app:A}

In this appendix we outline Waldhausen's construction of algebraic K-theory. In particular, we build to a fundamental additivity result: Waldhausen Additivity. The work of Waldhausen was first published in 1985 \cite{Wald}. Our notation follows the much more recent article of Fiore and Pieper \cite{fiore}.

\begin{definition} A Waldhausen category, $\sC$, is a category equipped with a subcategory of weak equivalences, $w(\sC)$, a subcategory of cofibrations, $co(\sC)$, and a distinguished zero object. Further, the triple $(\sC, co(\sC), w(\sC))$ must satisfy
\begin{enumerate}
\item Every isomorphism in $\sC$ is a cofibration;
\item Each object $c \in \sC$ is cofibrant, i.e., the unique map $0 \to c$ is a cofibration;
\item Cokernels exist and define cofibration sequences; and
\item Weak equivalences glue along cofibrations.
\end{enumerate}
\end{definition}

Waldhausen categories are a more general setting for algebraic K-theory than Abelian and exact categories. In particular, if $R$ is a commutative ring and $\cM$ is the category of finitely generated modules, then declaring weak equivalences to be isomorphisms and cofibrations to be monomorphisms makes $\cM$ into a Waldhausen category. The following is straightforward verification.

\begin{lemma}\label{lem:toWald}
Let $\sA$ be a Waldhausen category and $\sD$ a small category. The category of functors $\mathsf{Fun}(\sD,\sA)$ is a Waldhausen category where
\begin{itemize}
\item[(z)] The zero object $Z \in \mathsf{Fun}(\sD, \sA)$ is the constant functor to the distinguished zero object in $\sA$;
\item[(w)] A natural transformation $\eta \colon F \Rightarrow G$ is a weak equivalence if and only if for each $d \in \sD$, $\eta_d \colon F(d) \to G(d)$ is an isomorphism; and
\item[(c)] A natural transformation $\alpha \colon F \Rightarrow G$ is a cofibration if and only if for each $d \in \sD$, $\alpha_d \colon F(d) \to G(d)$ is a monomorphism.
\end{itemize}
\end{lemma}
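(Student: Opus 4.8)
The plan is to verify every clause in the definition of a Waldhausen category for $\mathsf{Fun}(\sD,\cM)$ \emph{pointwise}, reducing each property to the corresponding property of $\cM$, which is Waldhausen by hypothesis. The only feature of $\sD$ that I would use is that it is small, so that $\mathsf{Fun}(\sD,\cM)$ is a genuine category and its finite limits and colimits are computed objectwise. First I would check that the classes described in (w) and (c) really are subcategories of $\mathsf{Fun}(\sD,\cM)$ containing all objects: a natural transformation whose components all lie in $w(\cM)$ (resp.\ $co(\cM)$) composes with another such to give one again, since $w(\cM)$ and $co(\cM)$ are closed under composition and contain identities, and naturality is automatically preserved under composition; moreover an isomorphism of functors has isomorphism components, so (w) contains all isomorphisms. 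The distinguished zero object is the constant functor $Z$ at $0\in\cM$; being a pointwise limit and a pointwise colimit of zero objects, it is simultaneously initial and terminal in $\mathsf{Fun}(\sD,\cM)$.

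Axioms (1) and (2) are then immediate. For (1), an isomorphism in $\mathsf{Fun}(\sD,\cM)$ has isomorphism components, and isomorphisms in $\cM$ are cofibrations by axiom (1) for $\cM$, so the transformation is a cofibration. For (2), the unique map $Z\Rightarrow F$ has components the canonical maps $0\to F(d)$, each a cofibration in $\cM$ by axiom (2) there; hence $Z\Rightarrow F$ is a cofibration and every object of $\mathsf{Fun}(\sD,\cM)$ is cofibrant.

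For axiom (3) I would use that cokernels---and more generally pushouts of cofibrations along arbitrary maps---are formed objectwise. Given a cofibration $\alpha\colon F\Rightarrow G$, the componentwise cokernels $\operatorname{coker}(\alpha_d)$ assemble by functoriality into an object $G/F\in\mathsf{Fun}(\sD,\cM)$ together with a natural transformation $G\Rightarrow G/F$; since for each $d$ the map $\alpha_d$ fits into the cofibration sequence supplied by axiom (3) for $\cM$, and cofibrations in $\cM$ are stable under cobase change, the sequence $F\Rightarrow G\Rightarrow G/F$ is a cofibration sequence in $\mathsf{Fun}(\sD,\cM)$ and cofibrations there are stable under pushout as well (being so componentwise). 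Finally, for axiom (4) I would unravel the gluing statement: given a morphism of pushout diagrams in $\mathsf{Fun}(\sD,\cM)$ whose backward leg is a cofibration and whose three comparison maps are weak equivalences, evaluation at each $d\in\sD$ produces exactly the gluing axiom (4) for $\cM$; since the induced map on pushouts is computed objectwise and being a weak equivalence is detected objectwise, that map is a weak equivalence.

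I expect no serious obstacle: the one step worth stating carefully is axiom (4), since it requires matching the shape of the gluing diagram in the functor category with that in $\cM$ and checking the induced comparison is the expected objectwise map, but even there all content is inherited from $\cM$, so the verification is, as the main text asserts, straightforward. It may also be worth remarking that nothing in the argument uses that $\cM$ is a category of modules over a ring rather than an arbitrary Waldhausen category, so the same proof shows that $\mathsf{Fun}(\sD,-)$ preserves Waldhausen structures with objectwise weak equivalences and cofibrations.
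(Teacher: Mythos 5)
Your proof is correct and is essentially the argument the paper has in mind: the paper offers no written proof, calling the lemma a ``straightforward verification,'' and your objectwise reduction of the zero object, axioms (1)--(4), and the gluing lemma to the corresponding facts in $\cM$ (using that (co)limits in $\mathsf{Fun}(\sD,\cM)$ are computed pointwise) is precisely that verification. Your closing observation that nothing uses $\cM$ being a module category, so the statement holds for any Waldhausen target with objectwise structure, is also accurate.
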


Given a Waldhausen category $\sC$, there is an associated simplicial Waldhausen category denoted $S_\bullet \sC$ and the subcategory, $wS_\bullet \sC$, of weak equivalences. The \emph{K-theory spectrum of $\sC$} is defined to be the $\Omega$-spectrum whose $nth$ space is given by
\[
\KK(\sC)_n := \lvert w \underbrace{S_\bullet S_\bullet \dotsb S_\bullet}_{n \text{ iterates}} \sC \rvert,
\]
i.e., the realization of the subcategory of weak equivalences of the $n$-fold (degreewise) application of the $S_\bullet$ construction.

\begin{definition} Let $\sA, \sE,$ and $\sB$ be Waldhausen categories. A sequence of exact functors
\[
\sA \xrightarrow{i} \sE \xrightarrow{f} \sB
\]
is \emph{exact} if
\begin{enumerate}
\item The composition $f \circ i$ is the zero map to $\sB$;
\item The functor $i$ is fully faithful; and
\item The functor $f$ restricts to an equivalence between $\sE / \sA$ and $\sB$.\footnote{Here,  $\sE/\sA$ is the full subcategory of $\sE$ on objects $e$ such that for all $a \in \sA$ the hom set $\sE (i(a), e)$ is a point.} 
\end{enumerate}
A sequence, as above, is \emph{split} if there exist exact functors
\[
\sA \xleftarrow {j} \sE \xleftarrow{g} \sB
\]
that are adjoint to $i$ and $f$ and such that the unit of the adjunction, $\mathrm{Id}_\sA \Rightarrow j \circ i$, and the counit of the adjunction, $f \circ g \Rightarrow \mathrm{Id}_\sB$, are natural isomorphisms.
\end{definition}

\begin{definition}\label{defn:standard}
 A split short exact sequence of Waldhausen categories
\[
\xymatrix{
\sA \ar[r]_{i} &
\sE \ar[r]_{f} \ar@/_1.0pc/[l]_{j} &
\sB \ar@/_1.0pc/[l]_{g},
}
\]
is \emph{standard} if 
\begin{enumerate}
\item For each $e \in \sE$, the component of the counit, $(i \circ j)(e) \to e$, is a cofibration;
\item For each cofibration $e \hookrightarrow e'$ in $\sE$, the induced map
\[
e \amalg_{(i \circ j)(e)} (i \circ j) (e') \to e'
\]
is a cofibration; and
\item If $a \to a' \to 0$ is a cofiber sequence in $\sA$, then the first map is an isomorphism.
\end{enumerate}
\end{definition}

The following is one of the fundamental theorems of algebraic K-theory. It is known as {\it Waldhausen Additivity}.

\begin{theorem}\label{thm:additivity}
Let
\[
\xymatrix{
\sA \ar[r]_{i} &
\sE \ar[r]_{f} \ar@/_1.0pc/[l]_{j} &
\sB \ar@/_1.0pc/[l]_{g},
}
\]
be a standard split SES of Waldhausen categories. Then the functors $i$ and $g$ induce an equivalence of spectra
\[
\KK(i) \vee \KK(g) \colon \KK(\sA) \vee \KK(\sB) \xrightarrow{\sim} \KK(\sE).
\]
\end{theorem}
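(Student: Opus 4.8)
The plan is to construct an explicit homotopy inverse to $\KK(i)\vee\KK(g)$ out of the splitting data $(j,g)$, reducing the statement to the additivity of $\KK$ along a single cofibration sequence of exact endofunctors of $\sE$. First I would unwind the adjunction data: the description of unit and counit in the definition forces $i\dashv j$ with counit $\epsilon\colon ij\Rightarrow\mathrm{Id}_\sE$ and (by hypothesis) invertible unit $u\colon\mathrm{Id}_\sA\Rightarrow ji$, and $f\dashv g$ with unit $\mathrm{Id}_\sE\Rightarrow gf$ and invertible counit $fg\Rightarrow\mathrm{Id}_\sB$; also $fi=0$ by exactness. By standardness condition (1) each component $\epsilon_e\colon ij(e)\to e$ is a cofibration, so I may form the cokernel functor $Q\colon\sE\to\sE$, $Q(e):=e/ij(e)$, sitting in a cofibration sequence of exact endofunctors
\[
ij\;\hookrightarrow\;\mathrm{Id}_\sE\;\twoheadrightarrow\;Q .
\]
Checking that $Q$ is a well-defined \emph{exact} functor is precisely what the standardness axioms buy: condition (1) produces the cokernels functorially, condition (2) forces $Q$ to send cofibrations to cofibrations, and condition (3) together with the Waldhausen gluing axiom handles the degenerate comparisons and preservation of weak equivalences. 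Granting the Additivity Theorem in its functor form --- a cofibration sequence of exact functors $\sX\to\sC$ induces $\KK(F)\simeq\KK(F')+\KK(F'')$ on mapping spectra (see \cite{Wald} or \cite{fiore}) --- the displayed sequence gives $\mathrm{Id}_{\KK(\sE)}=\KK(\mathrm{Id}_\sE)\simeq\KK(ij)+\KK(Q)=\KK(i)\KK(j)+\KK(Q)$.

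Next I would identify $\KK(Q)$ with $\KK(g)\KK(f)$. Applying the exact functor $f$ to the cofibration sequence and using $f(ij)=(fi)j=0$ yields $0\hookrightarrow f\twoheadrightarrow fQ$, so $fQ\cong f$; applying $j$ and using the triangle identity $j\epsilon\circ uj=\mathrm{Id}_j$ (so that $j\epsilon\colon jij\to j$ is invertible, $u$ being invertible) yields a cofibration sequence whose first map is an isomorphism, hence $jQ\cong 0$. By the adjunction $i\dashv j$, an object $e\in\sE$ lies in $\sE/\sA$ if and only if $j(e)\cong 0$, and (since $fg\cong\mathrm{Id}_\sB$ and $g$ lands in $\sE/\sA$) the functor $g$ is necessarily a quasi-inverse of the equivalence $f|_{\sE/\sA}\colon\sE/\sA\xrightarrow{\sim}\sB$. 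As $jQ\cong 0$, the functor $Q$ factors through $\sE/\sA$, so $Q\cong g\,f|_{\sE/\sA}\,Q=g(fQ)\cong gf$. Therefore $\mathrm{Id}_{\KK(\sE)}\simeq\KK(ij)+\KK(gf)=(\KK(i)\vee\KK(g))\circ(\KK(j),\KK(f))$. The reverse composite I would handle by the elementary relations $ji\cong\mathrm{Id}_\sA$, $fg\cong\mathrm{Id}_\sB$, $fi=0$, and $jg\cong 0$ (the last because $g$ lands in $\sE/\sA$, where $j$ vanishes): the induced self-map of $\KK(\sA)\vee\KK(\sB)$ is, up to homotopy, the identity matrix. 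Hence $(\KK(j),\KK(f))$ is a homotopy inverse of $\KK(i)\vee\KK(g)$.

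The real obstacle is the functor form of the Additivity Theorem invoked in the first step: that is the technical heart of Waldhausen K-theory and requires the simplicial homotopy theory of the $S_\bullet$-construction (the approximation and cofinality arguments of \cite{Wald}, as streamlined in \cite{fiore}). Everything downstream of it --- constructing $Q$, deducing its exactness from the standardness axioms, and the adjunction bookkeeping that identifies $Q\simeq gf$ and computes the two composites --- I expect to be a routine, if somewhat lengthy, diagram chase. A secondary point to be careful about is that the comparisons $fQ\cong f$, $jQ\cong 0$, and $Q\cong gf$ must be checked to be natural isomorphisms of \emph{exact} functors, so that they induce the asserted homotopies on K-theory spectra.
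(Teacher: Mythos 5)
The paper does not actually prove this statement: Theorem \ref{thm:additivity} is quoted as ``Waldhausen Additivity'' and referred to \cite{Wald} and \cite{fiore}, so there is no internal argument to compare against. What you propose is the standard reduction, found in exactly those sources, of the split-SES form to the cofibration-sequence (``functor'') form of additivity: form $Q(e)=e/ij(e)$ using standardness (1), note that (2) is precisely the condition making $ij\hookrightarrow \mathrm{Id}_\sE\twoheadrightarrow Q$ a cofibration sequence of exact functors, apply additivity to get $\mathrm{Id}_{\KK(\sE)}\simeq \KK(ij)+\KK(Q)$, and identify $Q\cong gf$. That identification is indeed provable from the stated axioms, and your sketch of it can be tightened as follows: $g$ lands in $\sE/\sA$ not by assumption but because $\sE(i(a),g(b))\cong\sB(fi(a),b)=\sB(0,b)$ is a point; $Q(e)\in\sE/\sA$ because $jQ(e)\cong 0$ (triangle identity plus invertible unit makes $j\epsilon_e$ an isomorphism); the unit $\eta\colon\mathrm{Id}_\sE\Rightarrow gf$ kills $ij$ (since $fi=0$) and so induces $\bar\eta\colon Q\Rightarrow gf$, and $f(\bar\eta_e)$ is an isomorphism by the triangle identity for $f\dashv g$, whence $\bar\eta$ is a natural isomorphism because the equivalence $f|_{\sE/\sA}$ is conservative. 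With $Q\cong gf$ one gets $\KK(i)\KK(j)+\KK(g)\KK(f)\simeq\mathrm{Id}$, and your triangular-matrix computation of the other composite (using $ji\cong\mathrm{Id}_\sA$, $fg\cong\mathrm{Id}_\sB$, $fi=0$, $jg\cong 0$) finishes it; note the composite is an equivalence even without $jg\cong 0$. Two caveats: standardness condition (3) is not what gives preservation of weak equivalences by $Q$ --- the gluing axiom does (or simply cite $Q\cong gf$ with $f,g$ exact) --- and, as you say yourself, the technical heart (additivity for a cofibration sequence of exact functors) is still being imported from \cite{Wald}/\cite{fiore}, so your write-up is a correct derivation of the quoted theorem from another form of the same theorem rather than an independent proof; that is the same level of reliance on the literature as the paper itself.
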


\section{Proof of Theorem \ref{thm:equivcat}}\label{app:B}

The key ideas we use in the proof of the theorem go back to Grothendieck (and Verdier), specifically SGA4 Expos\'e VI \cite{SGA4}. The key observation---which we make precise---is that the category of zig--zag persistence modules is a localization of the \emph{Grothendieck construction} on the (psuedo)functor that sends a poset to its category of representations: $\cR \colon \mathsf{Poset}^{op} \to \mathsf{Cat}$. As we will explain, the Grothendieck construction is the lax colimit of $\cR$. Our domain category has an initial object, $\cZ \cZ_\NN$, hence, the colimit of $\Phi$ is isomorphic to the evaluation $\Phi (\cZ \cZ_\NN)$. Finally,  in Lemma \ref{lem:entrance} we recognized $\cZ \cZ_\NN$ as the poset underlying the (combinatorial) entrance path category of $\RR$ stratified with respect to the subset of natural numbers.

Throughout this appendix we will work with bicategories. Recall that any category can be considered as a bicategory with the only 2-morphisms being identities. The bicategory of categories, $\mathsf{Cat}$, consists of (small) categories, functors, and natural transformations. A reader who finds this appendix terse may find the recent book of Johnson and Yau \cite{JY} of great use.

\subsection{Psuedo and lax (co)limits}

When going from 1-categories to 2-categories there is more flexibility in definitions. This is already apparent when considering the notion of 2-functor and extends to limits and colimits as well. Many details of (co)limits in 2-categories were explicated in the 1980's by Ross Street and collaborators, for instance \cite{flex}. As an orienting exercise, let us recall the definition of lax and pseudo functors.

\begin{definition}
Let $\cA$ and $\cB$ be bicategories. A \emph{lax functor} $P \colon \cA \to \cB$ consists of
\begin{itemize}
\item A function $P \colon \mathrm{Obj}(\cA) \to  \mathrm{Obj}(\cB)$;
\item For each hom-category $\cA(X,Y)$ in $\cA$, a functor 
\[
P_{X,Y} \colon \cA(X,Y)\to \cB(P_X,P_Y);
\]
\item For each object $X\in \cA$ a 2-cell $P_{\id_X} \colon \id_{P_X} \Rightarrow P_{X,X}(1_X)$;
\item For each triple of objects and morphisms $f \colon X \to Y$ and $g \colon Y \to Z$ , a natural (in $f$ and $g$) transformation 
\[
P_{f,g} \colon P_{Y,Z} (g) \circ P_{X,Y}(f)  \Rightarrow   P_{X,Z}(g \circ f).
\] 
\end{itemize}
This data satisfies a sequence of coherence diagrams specifying unity and associativity.
\end{definition}

A lax functor is a \emph{psuedofunctor} if the 2-cells/natural transformations in the definition above are invertible. So a pseudofunctor is more strict than a lax functor, but not yet a strict 2-functor, which would require all higher morphisms to be identities. Correspondingly we have variable notions of colimit. For details see Chapter 5 of \cite{JY} and/or \cite{flex}.

\begin{definition}
Let $\Phi \colon \cA \to \cB$ be a lax functor.
\begin{itemize}
\item A \emph{lax colimit} of $\Phi$ is an initial object in the category of lax cocones under $\Phi$;
\item A \emph{psuedocolimit} of $\Phi$ is an initial object in the category of psuedococones under $\Phi$.
\end{itemize}
\end{definition}

The lax colimit of $\Phi$ is unique up to equivalence, while the psuedocolimit is unique up to isomorphism. We will use the notation $\colim \Phi$ for ``the" psuedocolimit of $\Phi$.

\begin{lemma}\label{lem:initial}
Let $\Phi \colon \cA \to \cB$ be a lax functor, $\cA$ an honest 1-category and $\fT \in \cA$ a terminal object. Then, we have an isomorphism
\[
\colim \Phi \cong \Phi (\fT).
\]
Correspondingly, if $\Psi \colon \cA^{op} \to \cB$ is a lax functor, $\cA$ an honest 1-category and $\fI \in \cA$ is initial, then
\[
\colim \Psi \cong \Psi (\fI).
\]
\end{lemma}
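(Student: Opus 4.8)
The plan is to prove the statement about terminal objects in detail; the initial-object version then follows by applying it to $\Psi$ viewed as a lax functor out of $(\cA^{op})^{op} = \cA^{op, op}$, or equivalently by dualizing every 2-cell, so I will only comment briefly on it at the end. The essential point is that a psuedocolimit is an initial object in the category of psuedococones under $\Phi$, so it suffices to exhibit a psuedococone with vertex $\Phi(\fT)$ and to check it is initial. I would construct the candidate psuedococone as follows: for each object $X \in \cA$ there is a unique morphism $!_X \colon X \to \fT$ (since $\fT$ is terminal in the $1$-category $\cA$), and I take the component at $X$ to be the $1$-morphism $\Phi_{X,\fT}(!_X) \colon \Phi(X) \to \Phi(\fT)$. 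The naturality 2-cells of the psuedococone, for a morphism $f \colon X \to Y$, are supplied by the lax-functoriality constraints $\Phi_{f, !_Y} \colon \Phi_{Y,\fT}(!_Y) \circ \Phi_{X,Y}(f) \Rightarrow \Phi_{X,\fT}(!_Y \circ f)$, combined with the observation that $!_Y \circ f = !_X$ since both are morphisms $X \to \fT$ in a $1$-category with $\fT$ terminal; these 2-cells are invertible precisely because $\Phi$ is a psuedofunctor, which is what makes this a psuedococone rather than merely a lax cocone.

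Next I would verify the coherence/unit conditions for this psuedococone: associativity of the naturality 2-cells reduces to the associativity coherence diagram for $\Phi$ together with the uniqueness of morphisms into $\fT$, and the unit condition at $\fT$ itself uses $\Phi_{\id_\fT}$ together with the fact that $!_\fT = \id_\fT$. This is a routine but somewhat fiddly diagram chase, so I would state it and indicate the inputs rather than draw every pasting diagram. With the psuedococone in hand, the heart of the argument is initiality: given any other psuedococone $\Lambda$ with vertex $C$ and components $\lambda_X \colon \Phi(X) \to C$, I must produce an essentially unique morphism of psuedococones $\Phi(\fT) \to C$. The obvious candidate is $\lambda_\fT \colon \Phi(\fT) \to C$. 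That this is a morphism of psuedococones amounts to checking that $\lambda_\fT \circ \Phi_{X,\fT}(!_X)$ is coherently isomorphic to $\lambda_X$ for every $X$ — but this is exactly (an instance of) the naturality 2-cell of the psuedococone $\Lambda$ applied to the morphism $!_X \colon X \to \fT$. For uniqueness, any morphism of psuedococones $g \colon \Phi(\fT) \to C$ must in particular be compatible with the components at $\fT$, and since $!_\fT = \id_\fT$ and $\Phi$ preserves identities up to the invertible 2-cell $\Phi_{\id_\fT}$, the coherence forces $g$ to be (uniquely 2-isomorphic to) $\lambda_\fT$.

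The step I expect to be the main obstacle is bookkeeping the coherence 2-cells correctly — specifically making sure that the natural transformations $\Phi_{f,g}$ and $\Phi_{\id_X}$ are pasted in the right order so that the psuedococone axioms hold on the nose, and that the identification $!_Y \circ f = !_X$ is used consistently (this is an \emph{equality} of $1$-morphisms in $\cA$, since $\cA$ is an honest $1$-category, which is what prevents the need for any further correction 2-cells and is exactly where the hypothesis ``$\cA$ an honest $1$-category'' is used). Everything is formally forced, but the diagrams are large; I would organize the writeup so that the reader sees the candidate cocone and the candidate comparison morphism immediately, and relegate the coherence verifications to ``a straightforward diagram chase using the coherence axioms of a psuedofunctor,'' citing Chapter 5 of \cite{JY}. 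Finally, for the initial-object statement: a lax functor $\Psi \colon \cA^{op} \to \cB$ with $\fI \in \cA$ initial has $\fI$ terminal in $\cA^{op}$, which is again an honest $1$-category, so the first part applies verbatim with $\fT$ replaced by $\fI$ and $\Phi$ by $\Psi$, giving $\colim \Psi \cong \Psi(\fI)$.
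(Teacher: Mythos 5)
The paper never actually proves this lemma --- it is stated without proof and then invoked in the final chain of isomorphisms --- so there is no argument of the authors' to compare yours against; your proposal has to stand on its own. What you propose is the standard cofinality-of-the-terminal-object argument: take components $\Phi_{X,\fT}(!_X)$, take the structure 2-cells to be the compositors $\Phi_{f,!_Y}$ combined with the strict equality $!_Y \circ f = !_X$ (this is indeed where the ``honest 1-category'' hypothesis enters), and check initiality against an arbitrary pseudococone $\Lambda$ via its component $\lambda_\fT$. For a \emph{pseudo}functor this is correct (modulo the coherence chases you defer, and modulo the usual caveat that initiality in the category of pseudococones only pins down the comparison morphism up to invertible 2-cell --- a point the paper itself glosses over), and since the lemma is only ever applied to the pseudofunctor $\cR$ in the proof of Theorem \ref{thm:equivcat}, your argument covers everything the paper actually needs.

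As a proof of the lemma \emph{as stated}, however, there is a genuine gap, which you half-acknowledge in passing: the hypothesis is only that $\Phi$ is a \emph{lax} functor, and for a merely lax functor the constraint cells $\Phi_{f,!_Y}$ need not be invertible, so the cocone you build with vertex $\Phi(\fT)$ is a lax cocone rather than a pseudococone, and your argument does not get started. The failure is not cosmetic: precomposing a 1-morphism $h \colon \Phi(\fT) \to C$ with this canonical cocone again yields only a lax cocone, so the expected correspondence between pseudococones under $\Phi$ and morphisms out of $\Phi(\fT)$ is not produced by this recipe, and at this level of generality the statement should either be restricted to pseudofunctors (which suffices for the paper's use) or established by a different argument. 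You should flag the mismatch between the ``lax functor'' hypothesis and your silent invocation of pseudofunctoriality explicitly, rather than letting it pass as a bookkeeping remark.
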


\subsection{The Grothendieck Construction}

\begin{definition}
Let $\mathsf{C}$ a category and $\Phi \colon \mathsf{C}^{op} \to \mathsf{Cat}$ be a lax functor. The \emph{Grothendieck construction}, $\int \Phi$, is the following category:
\begin{itemize}
\item An object of $\int \Phi$ is a pair, $(A,X)$, with $A \in \mathsf{C}$ and $X \in \Phi(A)$;
\item A morphism $(f,p) \colon (A, X) \to (B, Y)$ consists of
\begin{itemize}
\item A morphism $f \colon A \to B$ in the category $\mathsf{C}$; and
\item A morphism $p \colon X \to \Phi (f) (Y)$ in $\Phi (A)$.
\end{itemize}
\end{itemize}
\end{definition}

There are (reasonably) clear composition and identities in $\int \Phi$ and it is standard to verify that $\int \Phi$ actually defines a category.
The symbol ``$\int$" is meant to convey that the Grothendieck construction is amalgamating the data of $\Phi$ ``over" the domain category $\mathsf{C}$. Indeed, projection defines a functor $\int \Phi \to \mathsf{C}$ that is a fibration.

\begin{proposition}[Theorem 10.2.3 of \cite{JY}]
Let $\mathsf{C}$ a category and $\Phi \colon \mathsf{C}^{op} \to \mathsf{Cat}$ be a lax functor. The Grothendieck construction, $\int \Phi$, is a lax colimit of $\Phi$.
\end{proposition}

Let $U \colon \mathsf{E} \to \mathsf{C}$ be a functor and $\varphi \colon e \to e'$ a morphism in $\mathsf{E}$. Recall that $\varphi$ is \emph{cartesian} if every commutative triangle in $\mathsf{C}$ involving $U(\varphi)$ with a chosen lift of a 2-horn has a unique filler. (This definition is a bit colloquial, see Section 9.1 of \cite{JY}.)

\begin{corollary}
After localizing $\int \Phi$ at the collection of cartesian morphisms (with respect to projection $\int \Phi \to \mathsf{C}$) we obtain a pseudocolimit of $\Phi$, i.e., if $\mathrm{Cart}$ denotes the class of cartesian morphisms in $\int \Phi$, then
$\int \Phi [\mathrm{Cart}^{-1}] \cong \colim \Phi.$
\end{corollary}

\subsection{Proving the Theorem}

Let $\cR \colon \mathsf{Poset}^{\cZ\cZ_\NN/}_I \to \mathsf{Cat}$ be the psuedofunctor of linear representations, i.e., 
\[
\cR (\cZ\cZ_\NN \twoheadrightarrow \cP) := \mathsf{Fun} (\cP, \Vect).
\]
By design, the Grothendieck construction of $\cR$ recovers the category of marked zig-zag modules.

\begin{lemma} For $\cR$ defined above, $\int \cR \cong \mathsf{ZZmod}$.
\end{lemma}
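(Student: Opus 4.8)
The plan is to verify that the Grothendieck construction $\int \cR$ agrees with $\mathsf{ZZmod}$ by unwinding both definitions and exhibiting a functor that is the identity on objects and fully faithful. First I would recall the definition of $\int \cR$ for the pseudofunctor $\cR \colon \mathsf{Poset}^{\cZ\cZ_\NN/}_I \to \mathsf{Cat}$, $\cR(\cZ\cZ_\NN \twoheadrightarrow \cP) = \mathsf{Fun}(\cP, \Vect)$: an object is a pair $(\cZ\cZ_\NN \twoheadrightarrow \cP, \rho)$ where the first coordinate lives in the under-category $\mathsf{Poset}^{\cZ\cZ_\NN/}_I$ and $\rho \in \cR(\cZ\cZ_\NN \twoheadrightarrow \cP) = \mathsf{Fun}(\cP,\Vect)$ is a representation. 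This is verbatim the object data of $\mathsf{ZZmod}$, so the object correspondence is immediate and is a bijection.

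Next I would check morphisms. A morphism $(f, p) \colon (\cZ\cZ_\NN \twoheadrightarrow \cP, \rho) \to (\cZ\cZ_\NN \twoheadrightarrow \cQ, \eta)$ in $\int \cR$ consists of a morphism $f \colon (\cZ\cZ_\NN \twoheadrightarrow \cP) \to (\cZ\cZ_\NN \twoheadrightarrow \cQ)$ in $\mathsf{Poset}^{\cZ\cZ_\NN/}_I$ — that is, a surjection $f \colon \cP \twoheadrightarrow \cQ$ commuting with the structure maps from $\cZ\cZ_\NN$ — together with a morphism $p \colon \rho \to \cR(f)(\eta)$ in $\cR(\cZ\cZ_\NN \twoheadrightarrow \cP) = \mathsf{Fun}(\cP, \Vect)$. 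Since $\cR(f) = f^\ast \colon \mathsf{Fun}(\cQ,\Vect) \to \mathsf{Fun}(\cP,\Vect)$ is precomposition (pullback) along $f$, the datum $p$ is exactly a natural transformation $\varphi \colon \rho \Rightarrow f^\ast \eta$. Comparing with the definition of $\mathsf{ZZmod}$, the pair $(f, p)$ is precisely a morphism $(f, \varphi)$ of marked zig-zag modules, so the hom-sets match on the nose.

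Finally I would confirm that this identification is compatible with composition and identities. The composite of $(f, p)$ and $(g, q)$ in $\int \cR$ is $(g \circ f, \; \cR(f)(q) \circ p)$, where one uses the (pseudo)functoriality $2$-cells of $\cR$; since $\cR$ here is the strict pullback pseudofunctor on representation categories, $\cR(f)(q) = f^\ast q$ and there is no nontrivial coherence data, so this composite is $(g\circ f, \; (f^\ast q) \bullet p)$ — exactly the componentwise composite prescribed for $\mathsf{ZZmod}$ (whiskering $\varphi$ by $f^\ast$ and composing vertically). Identities correspond likewise. Hence the assignment is an isomorphism of categories $\int \cR \cong \mathsf{ZZmod}$. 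The one point requiring a little care — and the place I expect the only genuine friction — is making sure the pseudofunctor coherence cells of $\cR$ really are trivial (or at least canonical isomorphisms that match the whiskering conventions used to define composition in $\mathsf{ZZmod}$); this is why the statement is an isomorphism rather than merely an equivalence of categories, and it is the step worth writing out explicitly.
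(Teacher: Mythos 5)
Your unwinding of objects and morphisms is exactly the content behind the paper's statement, which is asserted "by design" with no written proof: the Grothendieck construction applied to the pullback pseudofunctor $\cR$ literally reproduces the definition of $\mathsf{ZZmod}$, with $p\colon\rho\to\cR(f)(\eta)=f^\ast\eta$ matching $\varphi\colon\rho\Rightarrow f^\ast\eta$. Your additional check of composition, identities, and the (strict) coherence cells is correct and only makes explicit what the paper leaves implicit.
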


\begin{lemma} A morphism $(f,\varphi) \colon (\cZ\cZ_\NN \twoheadrightarrow \cP , \rho) \to (\cZ\cZ_\NN \twoheadrightarrow \cQ , \eta)$ in $\mathsf{ZZmod}$ is cartesian if and only if $\varphi$ is a natural isomorphism.
\end{lemma}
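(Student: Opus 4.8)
The statement to prove is: a morphism $(f,\varphi) \colon (\cZ\cZ_\NN \twoheadrightarrow \cP , \rho) \to (\cZ\cZ_\NN \twoheadrightarrow \cQ , \eta)$ in $\mathsf{ZZmod} = \int \cR$ is cartesian with respect to the projection $\pi \colon \int \cR \to \mathsf{Poset}^{\cZ\cZ_\NN/}_I$ if and only if $\varphi \colon \rho \Rightarrow f^\ast \eta$ is a natural isomorphism. The plan is to unwind the definition of cartesian morphism for the Grothendieck construction directly, exploiting the fact that the base category $\mathsf{Poset}^{\cZ\cZ_\NN/}_I$ is a $1$-category (all $2$-cells are identities), so the ``unique filler of a $2$-horn'' condition collapses to an ordinary unique-lift statement.

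First I would set up notation: write a candidate second factor $(g,\psi) \colon (\cZ\cZ_\NN \twoheadrightarrow \cR_0, \mu) \to (\cZ\cZ_\NN \twoheadrightarrow \cQ, \eta)$ together with, for any $(h, \chi)\colon (\cZ\cZ_\NN \twoheadrightarrow \cR_0, \mu) \to (\cZ\cZ_\NN \twoheadrightarrow \cP, \rho)$ a factorization $g = f \circ h$ in the base; cartesianness of $(f,\varphi)$ means every such pair $(h,\chi)$ with $\pi(h,\chi) = h$ compatible data lifts uniquely through $(f,\varphi)$. Concretely, since $\mathsf{Poset}^{\cZ\cZ_\NN/}_I$ is a poset-like $1$-category, the only thing to check is: given $h \colon \cR_0 \twoheadrightarrow \cP$ with $f\circ h = g$ and given the natural transformation $\mu \Rightarrow h^\ast\rho$ that must compose (via $\varphi$) to $\psi$, is there a unique such transformation. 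The composite constraint reads $h^\ast(\varphi) \circ (\text{lift}) = \psi$ after identifying $h^\ast f^\ast \eta = g^\ast\eta$. If $\varphi$ is a natural isomorphism then $h^\ast\varphi$ is too, so the lift is forced to be $(h^\ast\varphi)^{-1}\circ\psi$, giving existence and uniqueness; conversely, if $\varphi$ is cartesian, apply the lifting property to the identity-based test data (take $\cR_0 = \cP$, $h = \mathrm{id}$, and let $(g,\psi)$ range appropriately) to produce a two-sided inverse to each component $\varphi_x$, forcing $\varphi$ to be a natural isomorphism.

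The key steps in order: (1) recall/cite from Section 9.1 of \cite{JY} the precise meaning of cartesian morphism specialized to a projection from a Grothendieck construction over a $1$-category, noting the $2$-horn filler condition degenerates; (2) prove the ``if'' direction by explicitly writing the forced lift $(h^\ast\varphi)^{-1}\circ\psi$ and checking it is the unique morphism in the fiber category $\cR(\cR_0) = \mathsf{Fun}(\cR_0,\Vect)$ making the relevant triangle commute — this is a one-line computation using functoriality of $h^\ast$ and invertibility of $\varphi$; (3) prove the ``only if'' direction by choosing a well-chosen test morphism whose unique lift exhibits componentwise inverses to $\varphi$, then invoking that a natural transformation is an isomorphism iff each component is.

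The main obstacle I anticipate is bookkeeping rather than conceptual: one must be careful that the second factor $(g,\psi)$ in the Grothendieck construction really does decompose as asserted (the morphism data in $\int\cR$ involves $\psi \colon \mu \Rightarrow g^\ast\eta$ and one must correctly identify $g^\ast\eta$ with $h^\ast f^\ast\eta$ via the pseudofunctoriality coherence $2$-cell of $\cR$, which here is an \emph{identity} because $\cR$ sends composition of poset surjections to honest equality $h^\ast f^\ast = (f\circ h)^\ast$ of precomposition functors). Making this identification explicit — and confirming the pseudofunctor $\cR$ is in fact strict on the relevant morphisms, so no nontrivial coherence cells intervene — is the one place where care is needed; once that is settled, both directions are short. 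I would therefore open with a sentence isolating this strictness observation, then dispatch the two implications.
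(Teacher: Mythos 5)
Your proposal is correct and follows essentially the same route as the paper: unwind the cartesian lifting condition for the projection $\int \cR \to \mathsf{Poset}^{\cZ\cZ_\NN/}_I$, use strictness of precomposition ($h^\ast f^\ast = (f\circ h)^\ast$), and observe that invertibility of $\varphi$ forces the unique filler $(h^\ast\varphi)^{-1}\circ\psi$. In fact you are slightly more complete than the paper, which asserts the ``only if'' direction without argument, whereas your test-data construction (with $h=\mathrm{id}$ and uniqueness of lifts giving a two-sided inverse to $\varphi$) supplies it.
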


\begin{proof}
Let $(f,\varphi) \colon (\cZ\cZ_\NN \twoheadrightarrow \cP , \rho) \to (\cZ\cZ_\NN \twoheadrightarrow \cQ , \eta)$ be a morphism and $(g,\psi) \colon (\cZ\cZ_\NN \twoheadrightarrow \cR , \alpha) \to (\cZ\cZ_\NN \twoheadrightarrow \cQ , \eta)$ a morphism such that $h \colon (\cZ\cZ_\NN \twoheadrightarrow \cR) \to (\cZ\cZ_\NN \twoheadrightarrow \cP)$ defines a commutative triangle in $\mathsf{Poset}^{\cZ\cZ_\NN /}_I$. We need to find a (unique) natural transformation $\chi \colon \alpha \Rightarrow h^\ast \rho$ such that $(h, \chi)$ fills the 2-horn upstairs in $\mathsf{ZZmod}$. This is possible precisely when $\varphi \colon \rho \Rightarrow \eta$ is an isomorphism. Indeed,  $g^\ast = h^\ast \circ f^\ast$, and $\psi \colon \alpha \Rightarrow g^\ast \eta$, so if $\varphi$ is an isomorphism we define
\[
\chi := \psi \colon \alpha \Rightarrow g^\ast \eta \cong h^\ast (\phi^\ast \eta) \cong h^\ast \rho.
\]
\end{proof}

\begin{lemma}
The object $(\cZ \cZ_\NN \xrightarrow{\; \mathrm{Id} \;} \cZ \cZ_\NN)$ is initial in $\mathsf{Poset}^{\cZ\cZ_\NN /}_I$.
\end{lemma}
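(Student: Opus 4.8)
The plan is to recognize this as the generic fact that, in any coslice (under) category $\mathcal{C}^{c/}$, the object $(c \xrightarrow{\mathrm{id}_c} c)$ is initial, specialized to $\mathcal{C} = \mathsf{Poset}_I$ and $c = \cZ\cZ_\NN$. The only genuine content is to confirm that $(\cZ\cZ_\NN \xrightarrow{\mathrm{id}} \cZ\cZ_\NN)$ really is an object of $\mathsf{Poset}^{\cZ\cZ_\NN/}_I$ — that is, that $\cZ\cZ_\NN$ lies in $\mathsf{Poset}_I$ and that the identity is an admissible (surjective) morphism — after which initiality is a one-line unwinding of the definition of the under category.

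First I would check that $\cZ\cZ_\NN \in \mathsf{Poset}_I$. By Lemma \ref{lem:entrance}, $\cZ\cZ_\NN \cong \mathsf{Ent}_\Delta((\RR,\NN)^{\wwedge})$, whose Hasse diagram, viewed as a one-dimensional simplicial complex, has a vertex for each zero- and one-stratum of $(\RR,\NN)^{\wwedge}$ and an edge joining each one-stratum to each of its two bounding zero-strata; geometrically this realizes as the ray $[0,\infty)$, which is homeomorphic to a half-closed interval. Its underlying set $\tfrac12\NN$ is countable. Hence $\cZ\cZ_\NN$ satisfies the defining conditions of $\mathsf{Poset}_I$, and since the identity $\mathrm{id}\colon \cZ\cZ_\NN \to \cZ\cZ_\NN$ is a surjective map of posets, $(\cZ\cZ_\NN \xrightarrow{\mathrm{id}} \cZ\cZ_\NN)$ is a bona fide object of $\mathsf{Poset}^{\cZ\cZ_\NN/}_I$.

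Next I would unwind the definition of a morphism out of this object. Given any object $(\cZ\cZ_\NN \xrightarrow{q} \cP)$ of $\mathsf{Poset}^{\cZ\cZ_\NN/}_I$, a morphism $(\cZ\cZ_\NN \xrightarrow{\mathrm{id}} \cZ\cZ_\NN) \to (\cZ\cZ_\NN \xrightarrow{q} \cP)$ is a surjection $f \colon \cZ\cZ_\NN \twoheadrightarrow \cP$ of posets making
\[
\xymatrix{
& \cZ\cZ_\NN \ar@{=}[dl] \ar[dr]^{q} & \\
\cZ\cZ_\NN \ar[rr]_{f} & & \cP
}
\]
commute, i.e.\ $f\circ\mathrm{id} = q$. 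This forces $f = q$; conversely $f = q$ manifestly makes the triangle commute and is surjective since $q$ is. Thus there is exactly one morphism from $(\cZ\cZ_\NN \xrightarrow{\mathrm{id}} \cZ\cZ_\NN)$ to each object of $\mathsf{Poset}^{\cZ\cZ_\NN/}_I$, which is precisely the statement that it is initial.

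There is no real obstacle here; the only step that takes a moment's care is verifying that $\cZ\cZ_\NN$ belongs to $\mathsf{Poset}_I$ (so that the candidate initial object is legitimate), which is exactly what Lemma \ref{lem:entrance} provides. Everything else is the formal universal property of a coslice category.
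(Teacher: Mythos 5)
Your proof is correct and follows essentially the same route as the paper's: unwinding the definition of a morphism in the under category and observing that commutativity of the triangle forces the map to equal the given structure map $q$, which is itself surjective. The preliminary verification that $\cZ\cZ_\NN$ indeed lies in $\mathsf{Poset}_I$ (so the object is legitimate) is a reasonable extra care the paper leaves implicit, but it does not change the argument.
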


\begin{proof}
Let $(\varphi \colon \cZ \cZ_\NN \twoheadrightarrow \cP) \in \mathsf{Poset}^{\cZ\cZ_\NN /}_I$. A map in the under category from $(\cZ \cZ_\NN \xrightarrow{\; \mathrm{Id} \;} \cZ \cZ_\NN)$ is a commutative diagram in $\mathsf{Poset}_I$
\[
\xymatrix{ &\cZ\cZ_\NN \ar[dl]_{\mathrm{Id}} \ar[dr]^{\mathrm{\varphi}} & \\
\cZ\cZ_\NN \ar[rr]_{\psi} && \cP.}
\]
By commutativity of the triangle, the map $\psi = \varphi$, so there is indeed a unique map in the under category.
\end{proof}

The precedings lemmas assemble to a proof of Theorem \ref{thm:equivcat}. More precisely, we have shown that
\[
\mathsf{ZZmod}[\cW^{-1}] \cong \smallint \cR [\mathrm{Cart}^{-1}] \cong \colim \cR \cong \cR (\cZ \cZ_\NN \xrightarrow{\; \mathrm{Id} \;} \cZ \cZ_\NN) \cong \mathsf{Fun} (\cZ \cZ_\NN , \Vect).
\]

\bibliographystyle{plain}
\bibliography{refs}

\end{document}